\documentclass{article}
\usepackage{amsmath}
\usepackage{amssymb}

\setcounter{MaxMatrixCols}{10}

\input xypic
\input xy
\xyoption{all}
\newtheorem{theorem}{Theorem}

\newtheorem{definition}[theorem]{Definition}

\newtheorem{proposition}[theorem]{Proposition}
\newtheorem{remark}[theorem]{Remark}

\newenvironment{proof}[1][Proof]{\noindent\textbf{#1.} }{\ \rule{0.5em}{0.5em}}
\input{tcilatex}
\pagestyle{myheadings} \markboth{\hfill   U. E. Arslan, Z. Arvasi
and G. Onarl\i \hfill}{\hfill Categorical Results in the Theory of
Two-Crossed Modules of Commutative Algebras \hfill}

\begin{document}

\author{ U. E. Arslan and G. Onarl\i\ }
\title{\text{Categorical Results in the Theory of} \\{ Two-Crossed Modules of
Commutative Algebras }}
\date{}
\maketitle

\begin{abstract}
In this paper we explore some categorical results of $2$-crossed module of commutative algebras extending work of Porter in \cite{[p1]}. We also show that the forgetful functor from the category of $2$-crossed modules to the category of $k$-algebras, taking $\{L,M,P,\partial _{2},\partial _{1}\}$ to the base algebra $P$, is fibred
and cofibred considering the pullback (coinduced) and induced $2$-crossed modules constructions, respectively. Also we consider free $2$-crossed modules as an application of induced $2$-crossed modules.
\end{abstract}

\section*{Introduction}
Crossed modules of groups were initially defined by Whitehead \cite%
{[w1],[w2]} as models for (homotopy) $2$-types. The commutative algebra case of crossed modules
is contained in the paper of Lichtenbaum and Schlessinger  \cite{L-S} and also the work of
Gerstenhaber \cite{G} under different names. Some categorical results and Koszul complex link are also
given by Porter \cite{[p1],[p2]}. Conduch\'{e}, \cite{cond}, in 1984 described the notion of $2$-crossed modules as a model for $3$-types. The commutative algebra version of $2$-crossed modules has been
defined by Grandje\'{a}n and Vale \cite{GV}. Arvasi and Porter
\cite{ap97},\cite{a} have important studies related with that
construction.

The purpose of this paper is to investigate some categorical theory of $2$-crossed modules.
It is considered the results as easy to prove but nonetheless giving some functorial relations between the category \textsf{X$_{2} $Mod} of $2$-crossed modules of commutative algebras and other categories with some adjoint pairs of functors would seem to be important to find out the question as to whether or not \textsf{X$_{2} $Mod} is a fibred category.

Here we will give the construction of the pullback and induced $2$%
-crossed modules of commutative algebras extending results of Shammu in \cite%
{[s]} and Porter in \cite{[p1]} for crossed modules of algebras. The construction of
pullback and induced $2$-crossed modules will give us a pair of
adjoint functors $\left( \phi ^{\ast },\phi _{\ast }\right) $ where
\begin{equation*}
\phi ^{\ast }\text{ }:\text{\textsf{X}}_{\mathbf{2}}\text{\textsf{Mod}/}%
R\rightarrow \text{\textsf{X}}_{\mathbf{2}}\text{\textsf{Mod}/}S\text{ \ and
}\phi _{\ast }:\text{\textsf{X}}_{\mathbf{2}}\text{\textsf{Mod}/}%
S\rightarrow \text{\textsf{X}}_{\mathbf{2}}\text{\textsf{Mod}/}R
\end{equation*}%
with respect to an algebra morphism $\phi :S\rightarrow R$.  Since
we have pullback object of \textsf{X$_{2} $Mod} along any arrow of
\textsf{k-Alg}, we get the fact
that \textsf{X$_{2} $Mod} is a fibred category and also cofibred
which is the dual of the fibred.

We end with an application which leads to link free $2$-crossed modules with induced $2$-crossed modules.

\subsubsection*{Conventions}

Throughout this paper $k$ will be a fixed commutative ring and $R$
will be a $k$-algebra with identity. All algebras will be commutative and actions will be left
and the right actions in some references will be rewritten by using
left actions.

\section{\protect\bigskip Two-Crossed Modules of Algebras}

Crossed modules of groups were initially defined by Whitehead \cite%
{[w1],[w2]} as models for (homotopy) $2$-types. Conduch\'{e}, \cite{cond},
in $1984$ described the notion of $2$-crossed module as a model for $3$%
-types. Both crossed modules and $2$-crossed modules have been adapted for
use in the context of commutative algebras in \cite{GV,[p2]}.

A crossed module is an algebra morphism $\partial :C\rightarrow R$
with an action of $R$ on $C$ satisfying $\partial \left( r\cdot
c\right) =r\partial \left( c\right) $ and $\partial \left( c\right)
\cdot c^{\prime }=cc^{\prime }$ for all $c,c^{\prime }\in C,r\in R.$
When the first equation is satisfied, $\partial $ is called
pre-crossed module.

If $(C,R,\partial )$ and $(C^{\prime },R^{\prime },\partial ^{\prime })$ are
crossed modules, a morphism,
\[
\left( \theta ,\varphi \right) :(C,R,\partial )\rightarrow (C^{\prime
},R^{\prime },\partial ^{\prime }),
\]%
of crossed modules consists of $k$-algebra homomorphisms $\theta
:C\rightarrow C^{\prime }$ and
\newline
$\varphi :R\rightarrow R^{\prime }$ such that
\[
(i)\ \partial ^{\prime }\theta =\varphi \partial \qquad (ii)\ \theta
(r\cdot c)=\varphi (r)\cdot\theta (c)
\]%
for all $r\in R,c\in C.$ We thus get the category \textsf{XMod }of
crossed modules.

Examples of crossed modules are:

(i) Any ideal, $I$, in $R$ gives an inclusion map $I \rightarrow R,$
which is a crossed module then we will say $\left( I,R,i\right) $ is \ an
ideal pair. In this case, of course, $R$ acts on $I$ by multiplication and
the inclusion homomorphism $i$ makes $\left( I,R,i\right) $ into a crossed
module, an \textquotedblleft inclusion crossed module\textquotedblright .
Conversely, given any crossed module, $\partial : C \rightarrow R$ one easily sees that the image $\partial (C)$ of $C$ is an ideal of $R.$

(ii) Any $R$-module $M$ can be considered as an $R$-algebra with zero
multiplication and hence the zero morphism $0:M\rightarrow R$ sending
everything in $M$ to the zero element of $R$ is a crossed module. Again
conversely, any $(C,R,\partial )$ crossed module, \text{Ker}$\partial $ is an ideal
in $C$ and inherits a natural $R$-module structure from $R$-action on $C.$
Moreover, $\partial (C)$ acts trivially on \text{Ker}$\partial ,$ hence
\text{Ker}$\partial $ has a natural $R/\partial (C)$-module structure.

(iii) Let M$(R)$ be multiplication algebra defined by Mac Lane
\cite{maclane} (see also \cite{lue}) as the set of all multipliers
$\delta:R \rightarrow R$ such that for all $r,r' \in R$, $\delta
(rr')=r\delta \left( r'\right)$ where $R$ is a commutative
$k$-algebra and $Ann\left( R\right) =0$ or $R^{2}=R$. Then $\mu:R
\rightarrow M$(R)$ $ is a crossed module given by $\mu(r)=\delta_r$
with $\delta_r(r')=rr'$ for all $r,r' \in R$. (See \cite{ae} for
details).

(iv) Any epimorphism of algebras $C\rightarrow R$ with the kernel in the
annihilator of $C$ is a crossed module, with $r\in R$ acting on $c\in C$ \
by \ $r\cdot c=\bar{c}c$, where $\bar{c}$ is any element in the pre-image of $%
r$.

Grandje\'{a}n and Vale \cite{GV} have given a definition of
$2$-crossed modules of algebras. The following is an equivalent
formulation of that concept.

A $2$\textit{-crossed module} of $k$-algebras consists of a complex of $P$%
-algebras $L\overset{\partial _{2}}{\rightarrow }M\overset{\partial _{1}}{%
\rightarrow }P$ together with an action of $P$ on all three algebras and a $%
P $-linear mapping%
\begin{equation*}
\left\{ -,-\right\} :M\times M\rightarrow L
\end{equation*}%
which is often called the Peiffer lifting such that the action of $P$ on
itself is by multiplication, $\partial _{2}$ and $\partial _{1}$ are $P$%
-equivariant.

$%
\begin{array}{cl}
\mathbf{PL1}: & \partial _{2}\left\{ m_{0},m_{1}\right\}
=m_{0}m_{1}-\partial _{1}\left( m_{1}\right) \cdot m_{0} \\
\mathbf{PL2}: & \left\{ \partial _{2}\left( l_{0}\right) ,\partial
_{2}\left( l_{1}\right) \right\} =l_{0}l_{1} \\
\mathbf{PL3}: & \left\{ m_{0},m_{1}m_{2}\right\} =\left\{
m_{0}m_{1},m_{2}\right\} +\partial _{1}\left( m_{2}\right) \cdot \left\{
m_{0},m_{1}\right\} \\
\mathbf{PL4}: & \left\{ m,\partial _{2}\left( l\right) \right\} +\left\{
\partial _{2}\left( l\right) ,m\right\} =\partial _{1}\left( m\right) \cdot l
\\
\mathbf{PL5}: & \left\{ m_{0},m_{1}\right\} \cdot p=\left\{ m_{0}\cdot
p,m_{1}\right\} =\left\{ m_{0},m_{1}\cdot p\right\}%
\end{array}%
$

\noindent for all $m,m_{0},m_{1},m_{2}\in M,l,l_{0},l_{1}\in L$ and $p\in P.$

Note that we have not specified that $M$ acts on $L$. We could have done
that as follows: if $m\in M$ and $l\in L$, define
\begin{equation*}
m\cdot l=\left\{ m,\partial _{2}\left( l\right) \right\} .
\end{equation*}%
From this equation $\left( L,M,\partial _{2}\right) $ becomes a crossed
module. We can split $\mathbf{PL4}$ into two pieces:

$\mathbf{PL4:}$%
\begin{equation*}
\begin{array}{crl}
(a) & \left\{ m,\partial _{2}\left( l\right) \right\} & =m\cdot l \\
& \left\{ \partial _{2}\left( l\right) ,m\right\} & =m\cdot l-\partial
_{1}\left( m\right) \cdot l.%
\end{array}%
\end{equation*}

We denote such a $2$-crossed module of algebras by $\left\{ L,M,P,\partial
_{2},\partial _{1}\right\} .$

A morphism of $2$-crossed modules is given by the following diagram

\begin{equation*}
\xymatrix { L \ar[r]^{\partial_{2}} \ar[d]_{f_2} & M
\ar[r]^{\partial_{1}} \ar[d]_{f_1} & P \ar[d]_{f_0} & \\ \ L^\prime
\ar[r]_{\partial^{\prime}_{2}} & M^\prime
\ar[r]_{\partial^{\prime}_{1}} & P^\prime & }
\end{equation*}%
where $f_{0}\partial _{1}=\partial _{1}^{\prime }f_{1},f_{1}\partial
_{2}=\partial _{2}^{\prime }f_{2}$%
\begin{equation*}
\begin{array}{ccc}
f_{1}\left( p\cdot m\right) =\text{ }f_{0}\left( p\right) \cdot \text{ }%
f_{1}\left( m\right) & , & f_{2}\left( p\cdot l\right) =\text{ }f_{0}\left(
p\right) \cdot f_{2}\left( l\right)%
\end{array}%
\end{equation*}%
for all $m\in M,l\in L,p\in P$ and
\begin{equation*}
\left\{ -,-\right\} \left( f_{1}\times f_{1}\right) =f_{2}\left\{ -,-\right\}
\end{equation*}%

We thus get the category of $2$-crossed modules denoting it by \textsf{X}$%
_{2}$\textsf{Mod} and when the morphism $f_{0}$ above is the
identity we will get \textsf{X}$_{2}$\textsf{Mod}$/P$ the category
of $2$-crossed modules over fixed algebra $P.$

Some remarks on Peiffer lifting of $2$-crossed modules are: Suppose we have
a $2$-crossed module%
\begin{equation*}
L\overset{\partial _{2}}{\rightarrow }M\overset{\partial _{1}}{\rightarrow }%
P
\end{equation*}%
with trivial Peiffer lifting. Then

(i) There is an action of $P$ on $L$ and$\ M$ and the $\partial $s are $P$%
-equivariant. (This gives nothing new in our special case.)

(ii) $%
\begin{array}{c}
\left\{ -,-\right\}%
\end{array}%
$ is a lifting of the Peiffer commutator so if $%
\begin{array}{c}
\left\{ m,m^{\prime }\right\} =0,%
\end{array}%
$ the Peiffer identity holds for $\left( M,P,\partial _{1}\right) ,$ i.e.
that is a crossed module.

(iii) if $l,l^{\prime }\in L,$ then $%
\begin{array}{c}
0=\left\{ \partial _{2}l,\partial _{2}l^{\prime }\right\} =ll^{\prime }%
\end{array}%
$

\noindent and,

(iv) as $%
\begin{array}{c}
\left\{ -,-\right\}%
\end{array}%
$ is trivial $%
\begin{array}{c}
\partial _{1}\left( m\right) \cdot l=0%
\end{array}%
$ so $\partial M$ has trivial action on $L.$

\noindent Axioms PL$3$ and PL$5$ vanish.

The above remarks are known for $2$-crossed modules of groups. These are
handled in recent book of Porter in \cite{PORTER}.

\subsection{\textit{Functorial Relations with Some Other Categories}}

\noindent $1.$\ Let $M\overset{\partial }{\rightarrow }P$ be a
pre-crossed module$.$ The Peiffer ideal $\langle M,M\rangle $ is
generated by the
Peiffer commutators%
\begin{equation*}
\langle m,m^{\prime }\rangle =\partial m\cdot m^{\prime }-mm^{\prime
}
\end{equation*}

\noindent for all $m,m^{\prime }\in M.$ The pre-crossed modules in
which all Peiffer commutators are trivial are precisely the crossed
modules. Thus the category of crossed modules is the full
subcategory of the category of
pre-crossed modules whose objects are crossed modules. So we can define the following skeleton functor%
\begin{equation*}
Sk :\text{\textsf{PXMod}}\rightarrow \text{\textsf{X}}_{2}\text{\textsf{Mod}%
}
\end{equation*}%
by $Sk(M,P,\partial _{1})=\left\{ \langle M,M\rangle ,M,P,\partial
_{2},\partial _{1}\right\} $ as a $2$-crossed module with the
Peiffer lifting $\left\{ m,m^{\prime }\right\} =\langle m,m^{\prime
}\rangle $. This
functor has a right adjoint truncation functor:%
\begin{equation*}
Tr :\text{\textsf{X}}_{2}\text{\textsf{Mod}}\rightarrow \text{\textsf{PXMod}%
}
\end{equation*}%
given by $Tr\left\{ L,M,P,\partial _{2},\partial _{1}\right\}
=(M,P,\partial _{1}).$

\noindent $2.$\ Any crossed module gives a $2$-crossed module. If
$\left( M,P,\partial \right) $\textbf{\ }is a crossed module, the
resulting sequence
\begin{equation*}
L\rightarrow M\rightarrow P
\end{equation*}%
is a $2$-crossed module by taking $L=0$. Thus we have
\begin{equation*}
\alpha :\text{\textsf{XMod}}\rightarrow \text{\textsf{X}}_{2}\text{\textsf{%
Mod}}
\end{equation*}%
defined by $\alpha (M,P,\partial )=\left\{ 0,M,P,0,\partial \right\}
$ that is the adjoint of the following functor:
\[
\beta :\mathsf{X}_{2}\mathsf{Mod}\rightarrow \mathsf{XMod}
\]%
given by $\beta \left\{ L,M,P,\partial _{2},\partial _{1}\right\} =\left( M/%
\func{Im}\partial _{2},P,\partial _{1}\right) $ where
$\func{Im}\partial _{2} $ is an ideal of $M.$ $3.$\ The functor
$\delta :\mathsf{XMod}\rightarrow $\textsf{k-Alg} which is
given by $\delta \left( C,R,\partial \right) =R$ has a right adjoint
$\gamma:$ \textsf{k-Alg}$\rightarrow \mathsf{XMod}$, $\gamma\left( A\right)
=\left( A,A,id_{A}\right) .$

\begin{proposition}\label{g1}
Given \textsf{X}$_{2}\mathsf{Mod}\overset{\beta }{\underset{\alpha }{%
\rightleftarrows }}\mathsf{XMod}\overset{\delta }{\underset{\gamma}{%
\rightleftarrows }}$\textsf{k-Alg} adjoint functors as defined
above. Then $\left( \delta \circ \beta ,\alpha \circ \gamma\right) $
is a pair of adjoint functors.
\end{proposition}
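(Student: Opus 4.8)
The plan is to establish the adjunction by writing down the required natural bijection of hom-sets; abstractly this is nothing but the fact that a composite of two adjoint pairs is again an adjoint pair, but it is worth making explicit because both composites are transparent. First I would compute the two composite functors. For a $2$-crossed module $\mathbf{X}=\{L,M,P,\partial_{2},\partial_{1}\}$ one has $(\delta\circ\beta)(\mathbf{X})=\delta(M/\operatorname{Im}\partial_{2},P,\partial_{1})=P$, so $\delta\circ\beta$ is precisely the base-algebra functor of the abstract, while for a $k$-algebra $A$ one has $(\alpha\circ\gamma)(A)=\alpha(A,A,\mathrm{id}_{A})=\{0,A,A,0,\mathrm{id}_{A}\}$. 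So the claim reduces to producing an isomorphism
\[
\mathrm{Hom}_{\textsf{k-Alg}}(P,A)\;\cong\;\mathrm{Hom}_{\mathsf{X}_{2}\mathsf{Mod}}\big(\{L,M,P,\partial_{2},\partial_{1}\},\{0,A,A,0,\mathrm{id}_{A}\}\big)
\]
natural in $\mathbf{X}$ and in $A$.

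Next I would exhibit the bijection directly: send an algebra homomorphism $f_{0}\colon P\to A$ to the triple $(f_{2},f_{1},f_{0})$ with $f_{2}=0\colon L\to 0$ and $f_{1}=f_{0}\partial_{1}\colon M\to A$. Checking that this triple is a morphism of $2$-crossed modules is routine and uses only data already at hand: $f_{1}\partial_{2}=f_{0}\partial_{1}\partial_{2}=0$ since $L\to M\to P$ is a complex; $f_{1}(p\cdot m)=f_{0}(p)f_{1}(m)$ since $\partial_{1}$ is $P$-equivariant and $A$ acts on itself by multiplication; and the equivariance of $f_{2}$ together with the Peiffer-lifting condition $\{-,-\}'(f_{1}\times f_{1})=f_{2}\{-,-\}$ hold because the top algebra of $\{0,A,A,0,\mathrm{id}_{A}\}$ is zero. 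For the inverse, any morphism $(f_{2},f_{1},f_{0})\colon\{L,M,P,\partial_{2},\partial_{1}\}\to\{0,A,A,0,\mathrm{id}_{A}\}$ must have $f_{2}=0$, as there is only one map into the zero algebra, and $f_{1}=\mathrm{id}_{A}\circ f_{1}=\partial'_{1}f_{1}=f_{0}\partial_{1}$, so the morphism is completely determined by $f_{0}$. Hence the two assignments are mutually inverse.

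Finally I would verify naturality, which is a short diagram chase: for a $2$-crossed module morphism $(g_{2},g_{1},g_{0})$ one uses $\partial'_{1}g_{1}=g_{0}\partial_{1}$ to see that precomposition corresponds to precomposition with $g_{0}=(\delta\circ\beta)(g_{2},g_{1},g_{0})$, and for an algebra map $\psi\colon A\to B$ one uses $(\alpha\circ\gamma)(\psi)=(0,\psi,\psi)$ to see that postcomposition corresponds to postcomposition with $\psi$. I do not anticipate any genuine obstacle; the only points needing care are bookkeeping ones — that $\operatorname{Im}\partial_{2}$ really is an ideal of $M$ so that $\beta$ is well defined (noted in the text), and that one keeps the variance straight, since $\alpha$ and $\gamma$ are the \emph{right} adjoints of $\beta$ and $\delta$, so it is the composite $\alpha\circ\gamma$ that is right adjoint to the left adjoint $\delta\circ\beta$. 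Alternatively, one may dispense with the explicit formulas and simply paste the two adjunction isomorphisms $\mathrm{Hom}_{\textsf{k-Alg}}(\delta\beta\mathbf{X},A)\cong\mathrm{Hom}_{\mathsf{XMod}}(\beta\mathbf{X},\gamma A)\cong\mathrm{Hom}_{\mathsf{X}_{2}\mathsf{Mod}}(\mathbf{X},\alpha\gamma A)$, the first coming from $\delta\dashv\gamma$ and the second from $\beta\dashv\alpha$.
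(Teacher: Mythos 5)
Your proposal is correct and is essentially the paper's argument: the paper's entire proof is the pasting of the two adjunction isomorphisms $\mathrm{Hom}_{\textsf{k-Alg}}(\delta\beta(\mathbf{X}),A)\cong\mathrm{Hom}_{\mathsf{XMod}}(\beta(\mathbf{X}),\gamma(A))\cong\mathrm{Hom}_{\mathsf{X}_{2}\mathsf{Mod}}(\mathbf{X},\alpha\gamma(A))$, which you give as your ``alternative.'' Your explicit bijection $f_{0}\mapsto(0,f_{0}\partial_{1},f_{0})$ with its inverse and naturality check is just this composite written out concretely, and all of its verifications are accurate.
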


\begin{proof}
For $\left\{ L,M,P,\partial _{2},\partial _{1}\right\} \in \ $\textsf{X}$_{2}%
\mathsf{Mod}$ and $R\in \ $\textsf{k-Alg}$,$ we have functorial
isomorphisms:
\[
\begin{array}{rl}
\text{\textsf{k-Alg}}\left( \delta \beta \left( \left\{ L,M,P,\partial
_{2},\partial _{1}\right\} \right) ,R\right)  & \simeq \text{\textsf{XMod}}%
\left( \beta \left\{ L,M,P,\partial _{2},\partial _{1}\right\}
,\gamma\left(
R\right) \right)  \\
& \simeq \textsf{X}_{2}\textsf{Mod}\left( \left\{ L,M,P,\partial
_{2},\partial _{1}\right\} ,\alpha \gamma\left( R\right) \right)
\end{array}%
\]
\end{proof}

Now we will give the construction of pullback and induced 2-crossed
modules. Similar constructions have appeared in several studies on
crossed module of groups, algebras and $2$-crossed modules of groups, e.g.
\cite{aao,[bh],[bhs],[bw1],[bw2]}.
\section{The Pullback Two-Crossed Modules}

The construction of \textquotedblleft change of \
base\textquotedblright \ is well-known in a module theory. The
higher dimension of this had been considered by Porter \cite{[p1]}
and Shammu \cite{[s]}. There are called (co)-induced crossed
modules. The first author and G\"{u}rmen were also deeply analysed
that in \cite{ag}. In this section the functor that is going to be
the right adjoint of the induced $2$-crossed module, the
\textquotedblleft pullback \textquotedblright\ will be defined. This
is an important construction which, given a morphism of algebras
$\phi :S\rightarrow R$, enables us to change of base of $2$-crossed
modules.

\begin{definition}
Given a crossed module $\partial :C\rightarrow R$ and a morphism of $k$%
-algebras $\phi :S\rightarrow R,$ the {\it pullback crossed
module} can be given by

(i) a crossed module $\phi ^{\ast }\left( C,R,\partial \right)
=(\partial ^{\ast }:\phi ^{\ast }(C)\rightarrow S)$

(ii) given
\[
\left( f,\phi \right) :\left( B,S,\mu \right) \longrightarrow
\left( C,R,\partial \right)
\]
crossed module morphism, then there is a unique $\left( f^{\ast
},id_{S}\right) $ crossed module morphism that commutes the
following diagram:

$$\xymatrix@R=40pt@C=40pt{
                &   (B,S,\mu)\ar@{.>}[dl]_{(f^{\ast},id_{S})} \ar[d]^{(f,\phi)}     \\
  (\phi^{\ast}(C),S,\partial^{\ast})  \ar[r]_-{(\phi^{\prime},\phi)} & (C,R,\partial)             }
$$
or more simply as
$$\xymatrix@R=20pt@C=20pt{
  B \ar[dd]_{\mu} \ar[rr]^{f}\ar@{.>}[dr]_{f^{\ast}}
              &  & C \ar[dd]^{\partial}  \\
&\phi^{\ast}(C)\ar[ur]_{\phi^{\prime}}\ar[dl]^{\partial^{\ast}}&
 \\ S  \ar[rr]_{\phi}
               & & R            }$$
where $\phi ^{\ast }(C)=C\times _{R}S=\{(c,s)\mid \partial (c)=\phi
(s)\},\ \partial^{\ast }(c,s)=s,\ $ $\phi ^{\prime }(c,s)=c$ for all
$(c,s)\in \phi ^{\ast }(C)$, and $S$ acts on $\phi ^{\ast }(C)$ via
$\phi $ and the diagonal.
\end{definition}
\begin{definition}
Given a $2$-crossed module $\left\{ C_{2},C_{1},R,\partial _{2},\partial
_{1}\right\} \ $and a morphism of $k$-algebras $\phi :S\rightarrow R,$\ the
pullback $2$-crossed module can be given by
\end{definition}

(i) a $2$-crossed module $%
\begin{array}{c}
\phi ^{\ast }\{C_{2},C_{1},R,\partial _{2},\partial _{1}\}=\{C_{2},\phi
^{\ast }(C_{1}),S,\partial _{2}^{\ast },\partial _{1}^{\ast }\}%
\end{array}%
$

(ii) given any morphism of $2$-crossed modules

\begin{equation*}
(f_{2},f_{1},\phi ):\{B_{2},B_{1},S,\partial _{2}^{\prime },\partial
_{1}^{\prime }\}\rightarrow \left\{ C_{2},C_{1},R,\partial _{2},\partial
_{1}\right\}
\end{equation*}%
there is a unique $(f_{2}^{\ast },f_{1}^{\ast },id_{S})$ $2$-crossed module
morphism that commutes the following diagram:

\begin{equation*}
\xymatrix { & & & (B_2, B_1,S,\partial_{2}^{\prime
},\partial_{1}^{\prime }) \ar@{-->}[ddlll]_{(f_2^*,f_1^*,id_S)}
\ar[dd]^{(f_2,f_1,\phi)} & \\ \ & & & & \\ \ (C_2,\phi^*(C_1), S,
\partial_{2}^{*},\partial_{1}^{*})\ar[rrr]_-{(id_{C_2},\phi^{\prime},\phi)} & & &
(C_2, C_1, R, \partial_{2},\partial_{1})& }
\end{equation*}%
or more simply as
\[
\xymatrix @R=20pt@C=25pt {  B_2  \ar@{-->}[dr]_{f_2^\ast} \ar@/^/[drrr]^{f_2} \ar[dd]_{\partial_2^{\prime }} & & & & \\
\ & C_2 \ar[dd]_{\partial_2^\ast} \ar@{=}[rr]_{id_{C_2}}& & C_2 \ar[dd]^{\partial_2}& \\
\ B_1  \ar@{-->}[dr]_{f_1^\ast} \ar@/^/[drrr]^{f_1} \ar[dd]_{\partial_1^{\prime }} & & & & \\
\ & {\phi^\ast}(C_1) \ar[dd]_{\partial_1^\ast} \ar[rr]_{\phi^{\prime }}& & C_1 \ar[dd]^{\partial_1}& \\
\ S \ar@{=}[dr]_{id_S} \ar@/^/[drrr]^\phi  & & & & \\
\ & S \ar[rr]_\phi & & R.  & }
\]

Let $\phi :S\rightarrow R$ be a morphism of $k$-algebras and let $%
(C_{1},R,\partial _{1})$ be a pre-crossed module. We define
\[
\phi ^{\ast }(C_{1})=C_{1}\times _{R}S=\left\{ \left( c_{1},s\right)
\mid
\partial _{1}\left( c_{1}\right) =\phi \left( s\right) \right\}
\]%
which is usually pullback in the category of algebras. There is a commutative diagram%
\[
\xymatrix{ \phi ^{\ast }(C_{1}) \ar[r]^-{\phi^\prime }
\ar[d]_{\partial_1^\ast} & C_{1} \ar[d]^{\partial_1} &\\
\ S \ar[r]_\phi & R &}
\]
where $\partial _{1}^{\ast }(c_{1},s)=s,$ $\phi ^{\prime
}(c_{1},s)=c_{1}$ and $\partial _{1}^{\ast }$ is $S$-equivariant
with the action $s^{\prime }\cdot (c_{1},s)=({\phi (s^{\prime })}
\cdot c_{1},s^{\prime }s)$ for all $(c_{1},s)\in \phi^{\ast }(C_{1}) $ and $ s\in S$.

So we get a pre-crossed module $\left( \phi ^{\ast }(C_{1}),S,\partial
_{1}^{\ast }\right) $ which is called the pullback pre-crossed module of $%
\left( C_{1},R,\partial _{1}\right) $ along $\phi .$ Then we can define a
pullback of $\partial _{2}:C_{2}\rightarrow C_{1}$ along $\phi
^{\prime }$ as given in the following diagram
\[
\xymatrix{ \phi ^{\ast }(C_{2}) \ar[r]^-{\phi''}
\ar[d]_{\partial_2^\ast} & C_{2} \ar[d]^{\partial_2} &\\
\ \phi ^{\ast }(C_{1}) \ar[r]_-{\phi'} & C_{1} &}
\]
in which
\begin{equation*}
\begin{array}{rl}
\phi ^{\ast }(C_{2}) & =\{(c_{2},(c_{1},s))\mid \partial
_{2}(c_{2})={\phi'}
(c_{1},s)=c_{1},\phi (s)={\partial _{1}}(c_{1})\} \\
& =\{(c_{2},(\partial _{2}(c_{2}),s))\mid \phi (s)=\partial
_{1}\left(
\partial _{2}(c_{2})\right) =0\}\cong C_{2}\times _{C_{1}}\left( Ker{\partial_1} \times Ker\phi \right)%
\end{array}%
\end{equation*}%
for all $c_{2}\in C_{2}$ and $\left( c_{1},s\right) \in \phi ^{\ast }(C_{1}).
$

Since pullback of a pullback is a pullback, we have already constructed the pullback
composition
\begin{equation*}
\phi ^{\ast }(C_{2})\overset{\partial _{2}^{\ast }}{\rightarrow
}\phi ^{\ast }(C_{1})\overset{\partial _{1}^{\ast }}{\rightarrow }S
\end{equation*}%
which is the pullback of $\partial _{1}\partial _{2}=0$ by $\phi .$

On the other hand, we can construct directly the pullback of $\ \partial _{1}\partial
_{2}=0$ by $\phi $ as $\partial :B\rightarrow S$ where $B=\left\{
\left( c_{2},s\right) \mid \phi \left( s\right) =0\right\} \cong
C_{2}\times Ker\phi $ and $\partial (c_{2},s)=s.$ We can define the isomorphism $\Psi :\phi
^{\ast }(C_{2})\rightarrow B$, $\Psi \left( x\right) =\left(
c_{2},s\right) $ where $x=(c_{2},(\partial _{2}(c_{2}),s))\in \phi
^{\ast }(C_{2}).$ So $\phi ^{\ast }(C_{2})\cong B.$

But, we find that the pullback $\phi ^{\ast
}(C_{2})\overset{\partial
_{2}^{\ast }}{\rightarrow }\phi ^{\ast }(C_{1})\overset{\partial _{1}^{\ast }%
}{\rightarrow }S$ is not a complex of $S$-algebras unless $\phi $ is
a monomorphism. To see this, note that for $(c_{2},s)\in C_{2}\times
\ \text{Ker}\phi ,$
\begin{equation*}
\partial _{1}^{\ast }\partial _{2}^{\ast }(c_{2},s)=\partial _{1}^{\ast
}(\partial _{2}c_{2},s)=s.
\end{equation*}%
This last expression is equal to $0$ if $\phi $ is a monomorphism.
So $\phi ^{\ast }(C_{2})\cong C_{2}.$ Thus, we can give the pullback $2$-crossed module of $C_{2}\overset{\partial _{2}}{%
\rightarrow }C_{1}\overset{\partial _{1}}{\rightarrow }R$ along
$\phi $ as follows.

\begin{proposition}
\label{ug2}If $C_{2}\overset{\partial _{2}}{\rightarrow }C_{1}\overset{\partial _{1}}{%
\rightarrow }R$ is a $2$-crossed module and if $\phi :S\rightarrow
R$ is a monomorphism of k-algebras then
\begin{equation*}
C_{2}\overset{\partial _{2}^{\ast }}{\rightarrow }\phi ^{\ast }(C_{1})%
\overset{\partial _{1}^{\ast }}{\rightarrow }S
\end{equation*}%
is a pullback $\ 2$-crossed module where $\partial _{2}^{\ast
}(c_{2})=(\partial _{2}(c_{2}),0)$ and $\partial _{1}^{\ast
}(c_{1},s)=s$
and the action of $S$ on $\phi ^{\ast }\left( C_{1}\right) $ and $C_{2}$ by $%
{s} \cdot (c_{1},s^{\prime })=\left({\phi \left( s \right) } \cdot
c_{1},ss^{\prime }\right) $ and ${s} \cdot c_{2}={\phi (s)} \cdot c_{2}$
respectively.
\end{proposition}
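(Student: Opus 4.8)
The plan is to verify that the data $\bigl(C_2,\phi^{\ast}(C_1),S,\partial_2^{\ast},\partial_1^{\ast}\bigr)$, equipped with the specified $S$-actions and an appropriately transported Peiffer lifting, satisfies all the axioms of a $2$-crossed module, and then to establish the universal property displayed in Definition~\ref{}. Since $\phi$ is a monomorphism, we have already observed in the discussion preceding the statement that $\phi^{\ast}(C_2)\cong C_2$ and that the composite $\partial_1^{\ast}\partial_2^{\ast}$ is zero, so the underlying sequence $C_2\overset{\partial_2^{\ast}}{\rightarrow}\phi^{\ast}(C_1)\overset{\partial_1^{\ast}}{\rightarrow}S$ is genuinely a complex of $S$-algebras; this is the first thing I would record. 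Next I would check that $\partial_2^{\ast}$ and $\partial_1^{\ast}$ are $S$-equivariant for the stated actions: for $\partial_1^{\ast}$ this is immediate from $\partial_1^{\ast}(s\cdot(c_1,s'))=\partial_1^{\ast}(\phi(s)\cdot c_1,ss')=ss'$, and for $\partial_2^{\ast}$ one computes $\partial_2^{\ast}(s\cdot c_2)=\partial_2^{\ast}(\phi(s)\cdot c_2)=(\partial_2(\phi(s)\cdot c_2),0)=(\phi(s)\cdot\partial_2(c_2),0)=s\cdot(\partial_2(c_2),0)$, using $P$-equivariance of the original $\partial_2$ and the definition of the $S$-action on $\phi^{\ast}(C_1)$ restricted to the kernel direction.

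The Peiffer lifting on the pullback I would define by transporting the original one along $\phi'$: set $\{(c_1,s),(c_1',s')\}^{\ast}:=\{c_1,c_1'\}$, where on the right we use the Peiffer lifting of $\{C_2,C_1,R,\partial_2,\partial_1\}$ and note the result lands in $C_2$, which is the top algebra of the pullback as well. I would then go through \textbf{PL1}--\textbf{PL5} one at a time. \textbf{PL1} requires $\partial_2^{\ast}\{(c_1,s),(c_1',s')\}^{\ast}=(c_1,s)(c_1',s')-\partial_1^{\ast}(c_1',s')\cdot(c_1,s)$; the left side is $(\partial_2\{c_1,c_1'\},0)=(c_1c_1'-\partial_1(c_1')\cdot c_1,0)$ by the original \textbf{PL1}, and the right side is $(c_1c_1',ss')-s'\cdot(c_1,s)=(c_1c_1'-\phi(s')\cdot c_1, ss'-s's)=(c_1c_1'-\partial_1(c_1')\cdot c_1,0)$ using the defining relation $\phi(s')=\partial_1(c_1')$ of the pullback and commutativity of $S$ — so the two agree. \textbf{PL2}, \textbf{PL3}, \textbf{PL4}, \textbf{PL5} are similarly reduced to the corresponding axioms for the original $2$-crossed module together with the bookkeeping that $\partial_1^{\ast}(c_1,s)=s$ acts on $C_2$ through $\phi(s)=\partial_1(c_1)$, so the $S$-action on $C_2$ in each Peiffer axiom reproduces exactly the old $R$-action; \textbf{PL2} is immediate because $\partial_2^{\ast}(c_2)=(\partial_2(c_2),0)$ has first coordinate $\partial_2(c_2)$, so $\{\partial_2^{\ast}(c_2),\partial_2^{\ast}(c_2')\}^{\ast}=\{\partial_2(c_2),\partial_2(c_2')\}=c_2c_2'$.

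For the universal property I would take a morphism $(f_2,f_1,\phi):\{B_2,B_1,S,\partial_2',\partial_1'\}\rightarrow\{C_2,C_1,R,\partial_2,\partial_1\}$ and define $f_1^{\ast}:B_1\rightarrow\phi^{\ast}(C_1)$ by $f_1^{\ast}(b_1)=(f_1(b_1),\partial_1'(b_1))$, which lands in the pullback because $\partial_1(f_1(b_1))=\phi(\partial_1'(b_1))$ by the morphism condition, and $f_2^{\ast}:B_2\rightarrow C_2$ simply by $f_2^{\ast}=f_2$ (legitimate since the top algebra of the pullback is $C_2$ itself). I would then verify that $(f_2^{\ast},f_1^{\ast},\mathrm{id}_S)$ is a morphism of $2$-crossed modules — commuting with the $\partial$'s, $S$-equivariance, and compatibility with Peiffer liftings, all of which follow coordinatewise from the assumption that $(f_2,f_1,\phi)$ was a morphism — that it composes with $(\mathrm{id}_{C_2},\phi',\phi)$ to recover $(f_2,f_1,\phi)$, and that it is the unique such filler: uniqueness of $f_1^{\ast}$ is forced by the universal property of the algebra pullback $\phi^{\ast}(C_1)=C_1\times_R S$, and uniqueness of $f_2^{\ast}$ is then forced by injectivity considerations in the top square. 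I expect the only mildly delicate point to be keeping track of which action is being used where in \textbf{PL3} and \textbf{PL4} — the repeated identification of the $S$-action on $C_2$ with the original $R$-action pulled back along $\phi$ — but because $\phi$ is assumed monic there is no genuine obstacle; everything reduces mechanically to the axioms already in force upstairs.
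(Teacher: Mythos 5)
Your proposal is correct and follows essentially the same route as the paper: the same transported Peiffer lifting $\{(c_1,s),(c_1',s')\}=\{c_1,c_1'\}$, the same reduction of the axioms \textbf{PL1}--\textbf{PL5} to the original ones via the pullback relation $\phi(s)=\partial_1(c_1)$, and the same candidate filler $f_2^{\ast}=f_2$, $f_1^{\ast}(b_1)=(f_1(b_1),\partial_1'(b_1))$ for the universal property. The only difference is cosmetic: you make the uniqueness of the filler explicit via the universal property of the algebra pullback, which the paper leaves implicit.
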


\begin{proof}
Since%
\begin{equation*}
\begin{array}{rcl}
\partial _{2}^{\ast }(s\cdot c_{2}) & = & \left( \partial _{2}\left( s\cdot
c_{2}\right) ,0\right)  \\
& = & \left( \partial _{2}(\phi \left( s\right) \cdot c_{2}),0\right)  \\
& = & \left( \phi (s)\cdot \partial _{2}\left( c_{2}\right) ,0\right)  \\
& = & s\cdot \left( \partial _{2}\left( c_{2}\right) ,0\right)  \\
& = & s\cdot \partial _{2}^{\ast }(c_{2}),%
\end{array}
\end{equation*}%
$\partial _{2}^{\ast }$ is $S$-equivariant. Also, we have seen above that $\partial _{1}^{\ast }$ is $S$-equivariant and
$C_{2}\rightarrow \phi ^{\ast }(C_{1})\rightarrow S$ is a complex of $S$-algebras.

The Peiffer lifting%
\begin{equation*}
\left\{ -,-\right\} :\phi ^{\ast }(C_{1})\times \phi ^{\ast
}(C_{1})\rightarrow C_{2}
\end{equation*}%
is given by $\left\{ \left( c_{1},s_{1}\right) \left( c_{1}^{\prime
},s_{1}^{\prime }\right) \right\} =\left\{ c_{1},c_{1}^{\prime
}\right\} .$

\textbf{PL1:}%
\begin{equation*}
\begin{array}{rcl}
\left( c_{1},s_{1}\right) \left( c_{1}^{\prime },s_{1}^{\prime }\right)
-\left( c_{1},s_{1}\right) \cdot \partial _{1}^{\ast }\left( c_{1}^{\prime
},s_{1}^{\prime }\right) & = & (c_{1}c_{1}^{\prime },s_{1}s_{1}^{\prime
})-\left( c_{1},s_{1}\right) \cdot s_{1}^{\prime } \\
& = & (c_{1}c_{1}^{\prime },s_{1}s_{1}^{\prime })-(c_{1}\cdot \phi \left(
s_{1}^{\prime }\right) ,s_{1}s_{1}^{\prime }) \\
& = & (c_{1}c_{1}^{\prime }-c_{1}\cdot \phi \left( s_{1}^{\prime }\right) ,0)
\\
& = & (c_{1}c_{1}^{\prime }-c_{1}\cdot \partial _{1}\left( c_{1}^{\prime
}\right) ,0) \\
& = & \left( \partial _{2}\left\{ c_{1},c_{1}^{\prime }\right\} ,0\right) \\
& = & \partial _{2}^{\ast }\left( \left\{ c_{1},c_{1}^{\prime }\right\}
\right) \\
& = & \partial _{2}^{\ast }\left\{ \left( c_{1},s_{1}\right) ,\left(
c_{1}^{\prime },s_{1}^{\prime }\right) \right\} .%
\end{array}%
\end{equation*}

\textbf{PL2:}%
\begin{equation*}
\begin{array}{rcl}
\left\{ \partial _{2}^{\ast }\left( c_{2}\right) ,\partial _{2}^{\ast
}\left( c_{2}^{\prime }\right) \right\} & = & \left\{ \left( \partial
_{2}\left( c_{2}\right) ,0\right) ,\left( \partial _{2}\left( c_{2}^{\prime
}\right) ,0\right) \right\} \\
& = & \left\{ \partial _{2}\left( c_{2}\right) ,\partial _{2}\left(
c_{2}^{\prime }\right) \right\} \\
& = & c_{2}c_{2}^{\prime }.%
\end{array}%
\end{equation*}

The rest of axioms of $2$-crossed module is given in appendix.%
\begin{equation*}
(id_{C_{2}},\phi ^{\prime },\phi ):\{C_{2},\phi ^{\ast }(C_{1}),S,\partial
_{2}^{\ast },\partial _{1}^{\ast }\}\rightarrow \left\{
C_{2},C_{1},R,\partial _{2},\partial _{1}\right\}
\end{equation*}%
or diagrammatically,%
\begin{equation*}
\xymatrix { C_2 \ar[d]_{\partial_{2}^*} \ar@{=}[r]^{id_{C_2}} & C_2
\ar[d]^{\partial_2} & \\ \ \phi^*(C_1) \ar[d]_{\partial_{1}^*}
\ar[r]^-{\phi^{\prime }} & C_1 \ar[d]^{\partial_1}& \\ \ S
\ar[r]_\phi & R & }
\end{equation*}%
is a morphism of $2$-crossed modules. (See appendix.)

Suppose that%
\begin{equation*}
(f_{2},f_{1},\phi ):\{B_{2},B_{1},S,\partial _{2}^{\prime },\partial
_{1}^{\prime }\}\rightarrow \left\{ C_{2},C_{1},R,\partial _{2},\partial
_{1}\right\}
\end{equation*}%
is any $2$-crossed module morphism%
\begin{equation*}
\xymatrix { B_2 \ar[r]^{\partial^{\prime }_{2}} \ar[d]_{f_2} & B_1
\ar[d]_{f_1} \ar[r]^{\partial^{\prime }_{1}} & S \ar[d]_\phi & \\ \
C_2 \ar[r]_{\partial_{2}} & C_1 \ar[r]_{\partial_{1}} & R. & \\ }
\end{equation*}%
Then we will show that there is a unique $2$-crossed module morphism%
\begin{equation*}
(f_{2}^{\ast },f_{1}^{\ast },id_{S}):\{B_{2},B_{1},S,\partial _{2}^{\prime
},\partial _{1}^{\prime }\}\rightarrow \{C_{2},\phi ^{\ast
}(C_{1}),S,\partial _{2}^{\ast },\partial _{1}^{\ast }\}
\end{equation*}%
\begin{equation*}
\xymatrix { B_2 \ar[r]^{\partial^{\prime }_{2}} \ar[d]_{f_2^*} & B_1
\ar[d]_{f_1^*} \ar[r]^{\partial^{\prime }_{1}} & S \ar@{=}[d]_{id_S} & \\
\  C_2 \ar[r]_{\partial_2} & \phi^*(C_1) \ar[r]_{\partial_1} & S &
\\ }
\end{equation*}%
where $f_{2}^{\ast }(b_{2})=f_{2}(b_{2})$ and $f_{1}^{\ast }(b_{1})=\left(
f_{1}(b_{1}),\partial _{1}^{\prime }(b_{1})\right) $ which is an element in $%
\phi ^{\ast }(C_{1}).$ First, let us check that $(f_{2}^{\ast },f_{1}^{\ast
},id_{S})$ is a morphism of $2$-crossed modules. For $b_{1}\in
B_{1},b_{2}\in B_{2},s\in S$%
\begin{equation*}
\begin{array}{ccl}
f_{2}^{\ast }(s\cdot b_{2}) & = & f_{2}(s\cdot b_{2}) \\
& = & \phi \left( s\right) \cdot f_{2}(b_{2}) \\
& = & \phi \left( s\right) \cdot f_{2}^{\ast }(b_{2}) \\
& = & s\cdot f_{2}^{\ast }(b_{2}) \\
& = & id_{S}\left( s\right) \cdot f_{2}^{\ast }(b_{2}),%
\end{array}%
\end{equation*}%
similarly $f_{1}^{\ast }(s\cdot b_{1})=$ $id_{S}\left( s\right) \cdot
f_{1}^{\ast }(b_{1}),$ also the above diagram is commutative and%
\[
\begin{array}{ccl}
\left\{ -,-\right\} \left( f_{1}^{\ast }\times f_{1}^{\ast }\right)
(b_{1},b_{1}^{\prime }) & = & \left\{ -,-\right\} \left( f_{1}^{\ast }\left(
b_{1}\right) ,f_{1}^{\ast }\left( b_{1}^{\prime }\right) \right)  \\
& = & \left\{ -,-\right\} \left( \left( f_{1}\left( b_{1}\right) ,\partial
_{1}^{\prime }\left( b_{1}\right) \right) ,\left( f_{1}\left( b_{1}^{\prime
}\right) ,\partial _{1}^{\prime }\left( b_{1}^{\prime }\right) \right)
\right)  \\
& = & \left\{ \left( f_{1}\left( b_{1}\right) ,\partial _{1}^{\prime }\left(
b_{1}\right) \right) ,\left( f_{1}\left( b_{1}^{\prime }\right) ,\partial
_{1}^{\prime }\left( b_{1}^{\prime }\right) \right) \right\}  \\
& = & \left\{ f_{1}\left( b_{1}\right) ,f_{1}\left( b_{1}^{\prime }\right)
\right\}  \\
& = & \left\{ -,-\right\} \left( f_{1}\left( b_{1}\right) ,f_{1}\left(
b_{1}^{\prime }\right) \right)  \\
& = & \left\{ -,-\right\} \left( f_{1}\times f_{1}\right)
(b_{1},b_{1}^{\prime }) \\
& = & f_{2}\left\{ -,-\right\} (b_{1},b_{1}^{\prime }) \\
& = & f_{2}\left( \left\{ b_{1},b_{1}^{\prime }\right\} \right)  \\
& = & f_{2}^{\ast }\left( \left\{ b_{1},b_{1}^{\prime }\right\} \right)  \\
& = & f_{2}^{\ast }\left\{ -,-\right\} (b_{1},b_{1}^{\prime })%
\end{array}%
\]
for all $b_{1},b_{1}^{\prime }\in B_{1}.$ So $(f_{2}^{\ast },f_{1}^{\ast
},id_{S})$ is a morphism of 2-crossed modules.

Furthermore; following equations are easily verified:

\begin{equation*}
id_{C_{2}}f_{2}^{\ast }=f_{2\qquad }\text{and}\qquad \phi ^{\prime
}f_{1}^{\ast }=f_{1}.
\end{equation*}
\end{proof}

\noindent Thus we get a functor
\begin{equation*}
\phi ^{\ast }:\text{\textsf{X}}_{2}\text{\textsf{Mod}}/R\rightarrow \text{%
\textsf{X}}_{2}\text{\textsf{Mod}}/S
\end{equation*}%
which gives our pullback $2$-crossed module.
\begin{remark}\label{natural}
These functors have the property that for any monomorphisms $\phi$
and $\phi ^{\prime }$ there are natural
equivalences $\phi ^{\ast }\phi ^{\prime \ast }\simeq \left( \phi ^{\prime
}\phi \right) ^{\ast }.$
\end{remark}
\subsection{The Examples of Pullback Two-Crossed Modules}

Given $2$-crossed module $%
\begin{array}{c}
\left\{ \left\{ 0\right\} ,I,R,0,i\right\}%
\end{array}%
$where $i$ is an inclusion of an ideal. The pullback $2$-crossed module is%
\begin{equation*}
\begin{array}{ccl}
\phi ^{\ast }\left\{ \left\{ 0\right\} ,I,R,0,i\right\} & = & \left\{
\left\{ 0\right\} ,\phi ^{\ast }\left( I\right) ,S,\partial _{2}^{\ast
},\partial _{1}^{\ast }\right\} \\
& = & \left\{ \left\{ 0\right\} ,\phi ^{-1}\left( I\right) ,S,\partial
_{2}^{\ast },\partial _{1}^{\ast }\right\}%
\end{array}%
\end{equation*}%
as,%
\begin{equation*}
\begin{array}{ccl}
\phi ^{\ast }\left( I\right) & = & \left\{ (a,s)\mid \phi \left( s\right)
=i(a)=a,s\in S,a\in I\right\} \\
& \cong & \left\{ s\in S\mid \phi \left( s\right) =a\right\} =\phi
^{-1}\left( I\right) \unlhd S.%
\end{array}%
\end{equation*}

\noindent The pullback diagram is

\begin{equation*}
\xymatrix { \{0\} \ar[d]_{\partial_{2}^{*}=0} \ar@{=}[r] & \{0\} \ar[d]^{0} & \\
\ \phi^{-1}(I) \ar[d]_{\partial_{1}^{*}} \ar[r] & I \ar[d]^i & \\ \ S
\ar[r]_\phi & R. & }
\end{equation*}

Particularly if $I=\left\{ 0\right\} ,$ since $\phi $ is monomorphism we get
\begin{equation*}
\phi ^{\ast }\left( \left\{ 0\right\} \right) \cong \left\{ s\in S\mid \phi
\left( s\right) =0\right\} =\text{ker}\phi \cong \{0\}
\end{equation*}%
and so $\left\{ \left\{ 0\right\} ,\left\{ 0\right\} ,S,0,0 \right\}
\ $is a pullback $2$-crossed module.

Also if $\phi $ is an isomorphism and $I=R,$ then $\phi ^{\ast }\left(
R\right) =R\times S.$

Similarly when we consider examples given in Section $1$, the following
diagrams are pullbacks.%
\begin{equation*}
\xymatrix { \{0\} \ar[d] \ar@{=}[r] & \{0\}
\ar[d] & \\ \ \phi^{*}(R) \ar[d] \ar[r] & R
\ar[d] & \\ M (S) \ar[r]_-{M({\phi})} & M (R) & }\xymatrix { \{0\}
\ar[d] \ar@{=}[r] & \{0\} \ar[d] & \\ \ M \times
Ker\phi \ar[d] \ar[r] & M \ar[d] & \\ \ S
\ar[r]_\phi & R & }
\end{equation*}
\section{The Induced Two-Crossed Modules}\label{induced}

We will define a functor $\phi_{\ast }$ left adjoint to the pullback
$\phi^{\ast }$ given in the previous section. The
\textquotedblleft induced $2$-crossed module\textquotedblright \
functor $\phi_{\ast }$ is defined by the following universal property, developing works in \cite{ae} and \cite{[p1]}.

\begin{definition}
For any crossed module $\mathcal{D}=(D,S,\partial )$ and any homomorphism $%
\phi :S\rightarrow R$ the crossed module $induced$ by $\phi $ from
$\partial $ should be given by:

$(i)$ a crossed module $\phi _{\ast }\left( \mathcal{D}\right)
=(\phi _{\ast }\left( D\right) $,$R,\phi _{\ast }\partial ),$

$(ii)$ a morphism of crossed modules $\left( f,\phi \right) :\mathcal{D}%
\rightarrow \phi _{\ast }\left( \mathcal{D}\right) ,$ satisfying the
dual
universal property that for any morphism of crossed modules%
\begin{equation*}
(h,\phi ):\mathcal{D\rightarrow B}
\end{equation*}%
there is a unique morphism of crossed modules $h^{\prime }:\phi
_{\ast
}(D)\rightarrow B$ such that the diagram%
\begin{equation*}
\xymatrix { & & B \ar@/^/[ddl]^v & \\ D \ar[r]_f \ar@/^/[urr]^h
\ar[d]^\partial & \phi _*(D) \ar[d]_{\phi _*\partial}
\ar@{-->}[ur]^{h^\prime}& & \\ S \ar[r]^\phi & R & & }
\end{equation*}%
commutes.
\end{definition}

\begin{definition}
For any $2$-crossed module $D_{2}\overset{\partial _{2}}{\rightarrow }D_{1}%
\overset{\partial _{1}}{\rightarrow }S$ and a morphism $\phi :S\rightarrow R$
of $k$-algebras, the induced $2$-crossed can be given by
\end{definition}

(i) a $2$-crossed module $\phi _{\ast }\left\{ D_{2},D_{1},S,\partial
_{2},\partial _{1}\right\} =\left\{ \phi _{\ast }(D_{2}),\phi _{\ast
}(D_{1}),R,\partial _{2_{\ast }},\partial _{1_{\ast }}\right\} $

(ii) given any morphism of $2$-crossed modules%
\begin{equation*}
(f_{2},f_{1},\phi ):\left\{ D_{2},D_{1},S,\partial _{2},\partial
_{1}\right\} \rightarrow \{B_{2},B_{1},R,\partial _{2}^{\prime
},\partial _{1}^{\prime }\}
\end{equation*}%
then there is a unique $(f_{2_\ast },f_{1_\ast },id_{R})\ 2$-crossed module
morphism that commutes the following diagram:
\begin{equation*}
\xymatrix { (D_2, D_1, S, \partial_2, \partial_1)
\ar[ddrrr]^{(\phi^{\prime\prime},\phi^{\prime},\phi)}
\ar[dd]_{(f_2,f_1,\phi)} & & & & \\ \ & & & & \\ \ (B_2, B_1, R,
\partial_2^{\prime},
\partial_1^{\prime}) & & & (\phi_\ast(D_2), \phi_\ast(D_1), R,
\partial _{2_{\ast }}, \partial _{1_{\ast }})
\ar@{-->}[lll]^{(f_{2_\ast},f_{1_\ast},id_R)}& }
\end{equation*}%
or more simply as

\[
\xymatrix @R=20pt@C=25pt { & & & B_2  \ar[dd]^{\partial _{2}^{\prime
}} & \\
\ D_2  \ar[dd]_{\partial_2} \ar[rr]_{\phi^{\prime \prime}} \ar@/^/[urrr]^{f_2} &  & \phi_\ast(D_2) \ar@{-->}[ur]_{f_{2_\ast}} \ar[dd]^{\partial_{2_\ast}} &  & \\
\ & & & B_1  \ar[dd]^{\partial _{1}^{\prime
}} & \\
\ D_1 \ar[dd]_{\partial_1} \ar[rr]_{\phi^\prime} \ar@/^/[urrr]^{f_1} &  & \phi_\ast(D_1) \ar@{-->}[ur]_{f_{1_\ast}} \ar[dd]^{\partial_{1_\ast}} & & \\
\  & & & R  & \\
\  S \ar@/^/[urrr]^\phi \ar[rr]_\phi &  & R. \ar@{=}[ur]_{id_R}& & }
\]

 \begin{proposition}
\label{ug3}Let $D_{2}\overset{\partial _{2}}{\rightarrow }D_{1}\overset{%
\partial _{1}}{\rightarrow }S$ be a $2$-crossed module and $\phi
:S\rightarrow R$ be a morphism of $k$-algebras. Then $\phi _{\ast }(D_{2})%
\overset{\partial _{2_{\ast }}}{\rightarrow }\phi _{\ast }(D_{1})\overset{%
\partial _{1_{\ast }}}{\rightarrow }R$ is the induced $2$-crossed module
where $\phi _{\ast }(D_{1})$ is generated as an algebra, by the set $%
D_{1}\times R$ with defining relations%
\begin{equation*}
\begin{array}{l}
(d_{1},r_{1})(d_{1}^{\prime },r_{1}^{\prime })=(d_{1}d_{1}^{\prime
},r_{1}r_{1}^{\prime }), \\
(d_{1},r)-(d_{1}^{\prime },r)=(d_{1}-d_{1}^{\prime },r), \\
\left( s.d_{1},r\right) =(d_{1},\phi \left( s\right) r)%
\end{array}%
\end{equation*}%
and $\phi _{\ast }(D_{2})$ is generated as an algebra, by the set $%
D_{2}\times R$ with defining relations%
\begin{equation*}
\begin{array}{l}
\left\{ \left( d_{1},r_{1}\right) +\left( d_{1}^{\prime },r_{1}^{\prime
}\right) ,(d_{1}^{\prime \prime },r_{1}^{\prime \prime })\right\} =\left\{
\left( d_{1},r_{1}\right) ,(d_{1}^{\prime \prime },r_{1}^{\prime \prime
})\right\} +\left\{ \left( d_{1}^{\prime },r_{1}^{\prime }\right)
,(d_{1}^{\prime \prime },r_{1}^{\prime \prime })\right\} , \\
\left\{ \left( d_{1},r_{1}\right) ,\left( d_{1}^{\prime },r_{1}^{\prime
}\right) +(d_{1}^{\prime \prime },r_{1}^{\prime \prime })\right\} =\left\{
\left( d_{1},r_{1}\right) ,\left( d_{1}^{\prime },r_{1}^{\prime }\right)
\right\} +\left\{ \left( d_{1},r_{1}\right) ,(d_{1}^{\prime \prime
},r_{1}^{\prime \prime })\right\} , \\
(d_{2},r_{2})(d_{2}^{\prime },r_{2}^{\prime })=(d_{2}d_{2}^{\prime
},r_{2}r_{2}^{\prime }), \\
(d_{2},r)+(d_{2}^{\prime },r)=(d_{2}+d_{2}^{\prime },r), \\
(s\cdot d_{2},r)=(d_{2},\phi (s)r)%
\end{array}%
\end{equation*}%
for any $d_{1},d_{1}^{\prime },d_{1}^{\prime \prime }\in D_{1},$ $%
d_{2},d_{2}^{\prime }\in D_{2},s\in S$ and$\ r,r_{1},r_{1}^{\prime
},r_{1}^{\prime \prime },r_{2},r_{2}^{\prime }\in R.$ The morphism $\partial
_{2_{\ast }}:\phi _{\ast }(D_{2})\rightarrow \phi _{\ast }(D_{1})$ is given
by $\partial _{2_{\ast }}(d_{2},r)=(\partial _{2}(d_{2}),r)$ the action of $%
\phi _{\ast }(D_{1})$ on $\phi _{\ast }(D_{2})$ by $(d_{1},r_{1})\cdot
(d_{2},r_{2})=(d_{1}\cdot d_{2},r_{2}),$ and the morphism $\partial
_{1_{\ast }}:\phi _{\ast }(D_{1})\rightarrow R$ is given by $\partial
_{1_{\ast }}(d_{1},r)=\phi \partial _{1}(d_{1})r,$ the action of $R$ on $%
\phi _{\ast }(D_{1})$ and $\phi _{\ast }(D_{2})$ by $r\cdot
(d_{1},r_{1})=(d_{1},rr_{1})$ and $r\cdot (d_{2},r^{\prime
})=(d_{2},rr^{\prime })$ respectively$.$
\end{proposition}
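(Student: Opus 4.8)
The plan is to treat Proposition~\ref{ug3} as the construction of a left adjoint, so the argument falls into three parts: (i) checking that every ingredient named in the statement is well defined on the given presentations; (ii) verifying axioms \textbf{PL1}--\textbf{PL5}, so that $\{\phi_{\ast}(D_{2}),\phi_{\ast}(D_{1}),R,\partial_{2_{\ast}},\partial_{1_{\ast}}\}$ is genuinely an object of $\mathsf{X}_{2}\mathsf{Mod}$; and (iii) establishing the universal property of the preceding definition. Throughout, the organising maps are $\phi':D_{1}\to\phi_{\ast}(D_{1})$, $d_{1}\mapsto(d_{1},1_{R})$ and $\phi'':D_{2}\to\phi_{\ast}(D_{2})$, $d_{2}\mapsto(d_{2},1_{R})$, and the key structural fact used repeatedly is that $D_{1},D_{2}$ are $S$-modules while $R$ is an $S$-module through $\phi$.

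For part (i), the three relations presenting $\phi_{\ast}(D_{1})$ are precisely those of extension of scalars along $\phi$, so $\phi_{\ast}(D_{1})$ is a well-defined commutative $R$-algebra (with $(d_{1},r)\leftrightarrow r\otimes d_{1}$, i.e.\ $\phi_{\ast}(D_{1})\cong R\otimes_{S}D_{1}$), and likewise $\phi_{\ast}(D_{2})$, whose first two relations record the bilinearity of the Peiffer lifting. One then checks on generators that $\partial_{1_{\ast}}(d_{1},r)=\phi\partial_{1}(d_{1})\,r$ and $\partial_{2_{\ast}}(d_{2},r)=(\partial_{2}d_{2},r)$ respect these relations; the only point needing attention is the scalar-change relation, where $S$-equivariance of $\partial_{1}$ gives $\phi\partial_{1}(s\cdot d_{1})\,r=\phi(s)\,\phi\partial_{1}(d_{1})\,r=\partial_{1_{\ast}}(d_{1},\phi(s)r)$, and similarly for $\partial_{2}$. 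That the prescribed actions of $R$ and of $\phi_{\ast}(D_{1})$ are compatible with the presentations is a direct verification (the latter using $S$-equivariance of the $D_{1}$-action on $D_{2}$), and the complex condition is immediate since $\partial_{1_{\ast}}\partial_{2_{\ast}}(d_{2},r)=\phi(\partial_{1}\partial_{2}d_{2})\,r=0$; $R$-equivariance of both boundaries is read off the formulas.

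Part (ii) is the technical core. The Peiffer lifting $\{-,-\}:\phi_{\ast}(D_{1})\times\phi_{\ast}(D_{1})\to\phi_{\ast}(D_{2})$ is defined on generators by $\{(d_{1},r_{1}),(d_{1}',r_{1}')\}=(\{d_{1},d_{1}'\},r_{1}r_{1}')$ and extended bilinearly; its bilinearity and compatibility with the target relations are built into the presentation of $\phi_{\ast}(D_{2})$, while compatibility in each source slot with the multiplicativity and scalar-change relations of $\phi_{\ast}(D_{1})$ follows from \textbf{PL5} (the $S$-linearity of $\{-,-\}$ for $D$) together with the scalar-change relation of $\phi_{\ast}(D_{2})$. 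Each of \textbf{PL1}--\textbf{PL5} for $\phi_{\ast}(D)$ then reduces, on generators, to the corresponding axiom for $D$ with the extra $R$-coordinate carried along multiplicatively; for instance \textbf{PL1} becomes $\partial_{2_{\ast}}\{(d_{1},r_{1}),(d_{1}',r_{1}')\}=(\partial_{2}\{d_{1},d_{1}'\},r_{1}r_{1}')=(d_{1}d_{1}'-\partial_{1}(d_{1}')\cdot d_{1},\,r_{1}r_{1}')$, which is rewritten as $(d_{1},r_{1})(d_{1}',r_{1}')-\partial_{1_{\ast}}(d_{1}',r_{1}')\cdot(d_{1},r_{1})$ using the product, action and boundary formulas and the scalar-change relations. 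I expect this step, rather than any individual identity, to be the main obstacle: the work lies entirely in keeping the two presentations mutually consistent while running through all five axioms, and especially in reconciling the prescribed action of $\phi_{\ast}(D_{1})$ on $\phi_{\ast}(D_{2})$ with the relation $m\cdot l=\{m,\partial_{2_{\ast}}l\}$ valid in every $2$-crossed module.

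For part (iii), given a morphism $(f_{2},f_{1},\phi):\{D_{2},D_{1},S,\partial_{2},\partial_{1}\}\to\{B_{2},B_{1},R,\partial_{2}',\partial_{1}'\}$ of $2$-crossed modules, I would define $f_{1_{\ast}}:\phi_{\ast}(D_{1})\to B_{1}$ and $f_{2_{\ast}}:\phi_{\ast}(D_{2})\to B_{2}$ on generators by $f_{1_{\ast}}(d_{1},r)=r\cdot f_{1}(d_{1})$ and $f_{2_{\ast}}(d_{2},r)=r\cdot f_{2}(d_{2})$, extended multiplicatively. Each respects the defining relations (the scalar-change relation uses $f_{i}(s\cdot d_{i})=\phi(s)\cdot f_{i}(d_{i})$), is $R$-equivariant, commutes with the boundaries (e.g.\ $\partial_{1}'f_{1_{\ast}}(d_{1},r)=r\cdot\partial_{1}'f_{1}(d_{1})=r\,\phi\partial_{1}(d_{1})=\partial_{1_{\ast}}(d_{1},r)$), and is compatible with the Peiffer liftings (using $f_{2}\{-,-\}=\{-,-\}(f_{1}\times f_{1})$ for $D$ and \textbf{PL5} for $B$); moreover $f_{1_{\ast}}\phi'=f_{1}$ and $f_{2_{\ast}}\phi''=f_{2}$. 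Uniqueness is forced, since $\phi_{\ast}(D_{1})$ and $\phi_{\ast}(D_{2})$ are generated over $R$ by the images of $\phi'$ and $\phi''$: any morphism over $\mathrm{id}_{R}$ compatible with $f_{1},f_{2}$ must send $(d_{1},1_{R})\mapsto f_{1}(d_{1})$ and $(d_{2},1_{R})\mapsto f_{2}(d_{2})$, hence by $R$-equivariance send $(d_{i},r)=r\cdot(d_{i},1_{R})$ to $r\cdot f_{i}(d_{i})$, and so coincide with $f_{i_{\ast}}$. This verifies the universal property of the preceding definition and, consequently, exhibits $\phi_{\ast}$ as left adjoint to $\phi^{\ast}$.
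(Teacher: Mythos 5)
Your proposal is correct and follows essentially the same route as the paper: the same Peiffer lifting $\{(d_{1},r_{1}),(d_{1}',r_{1}')\}=(\{d_{1},d_{1}'\},r_{1}r_{1}')$, verification of \textbf{PL1}--\textbf{PL5} on generators, the canonical morphism $(\phi'',\phi',\phi)$, and the induced morphisms $f_{i_{\ast}}(d_{i},r)=r\cdot f_{i}(d_{i})$ for the universal property, with your explicit well-definedness and uniqueness checks merely making precise what the paper leaves implicit. Only the parenthetical identification $\phi_{\ast}(D_{1})\cong R\otimes_{S}D_{1}$ overstates what the listed relations literally give (no additivity in the $R$-coordinate is imposed), but nothing in your argument depends on it.
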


\begin{proof}
(i) As $\partial _{1_{\ast }}\left( \partial _{2_{\ast }}(d_{2},r)\right)
=\partial _{1_{\ast }}(\partial _{2}(d_{2}),r)=\phi \left( \partial
_{1}\left( \partial _{2}(d_{2})\right) \right) r=\phi (0)r=0,$%
\begin{equation*}
\phi _{\ast }(D_{2})\overset{\partial _{2_{\ast }}}{\rightarrow }\phi _{\ast
}(D_{1})\overset{\partial _{1_{\ast }}}{\rightarrow }R
\end{equation*}%
is a complex of $k$-algebras. The Peiffer lifting%
\begin{equation*}
\left\{ -,-\right\} :\phi _{\ast }(D_{1})\times \phi _{\ast
}(D_{1})\rightarrow \phi _{\ast }(D_{2})
\end{equation*}%
is given by $\left\{ \left( d_{1},r_{1}\right) ,\left( d_{1}^{\prime
},r_{1}^{\prime }\right) \right\} =\left( \left\{
d_{1},d_{1}^{\prime }\right\} ,r_{1}r_{1}^{\prime }\right)$ for all
$ \left( d_{1},r_{1}\right),\left( d_{1}^{\prime },r_{1}^{\prime
}\right) \in \phi _{\ast }(D_{1}).$

\textbf{PL1:}%
\begin{equation*}
\begin{array}{rcl}
\partial _{2_{\ast }}\left\{ \left( d_{1},r_{1}\right) ,\left( d_{1}^{\prime
},r_{1}^{\prime }\right) \right\} & = & \partial _{2_{\ast }}\left( \left\{
d_{1},d_{1}^{\prime }\right\} ,r_{1}r_{1}^{\prime }\right) \\
& = & \left( \partial _{2}\left\{ d_{1},d_{1}^{\prime }\right\}
,r_{1}r_{1}^{\prime }\right) \\
& = & (d_{1}d_{1}^{\prime }-d_{1}\cdot \partial _{1}(d_{1}^{\prime
}),r_{1}r_{1}^{\prime }) \\
& = & (d_{1}d_{1}^{\prime },r_{1}r_{1}^{\prime })-(d_{1}\cdot \partial
_{1}(d_{1}^{\prime }),r_{1}r_{1}^{\prime }) \\
& = & (d_{1}d_{1}^{\prime },r_{1}r_{1}^{\prime })-(d_{1},\phi \left(
\partial _{1}(d_{1}^{\prime })\right) r_{1}r_{1}^{\prime }) \\
& = & (d_{1}d_{1}^{\prime },r_{1}r_{1}^{\prime })-(d_{1},r_{1})\cdot \left(
\phi \left( \partial _{1}(d_{1}^{\prime })\right) r_{1}^{\prime }\right) \\
& = & (d_{1}d_{1}^{\prime },r_{1}r_{1}^{\prime })-(d_{1},r_{1})\cdot
\partial _{1_{\ast }}\left( d_{1}^{\prime },r_{1}^{\prime }\right) \\
& = & \left( d_{1},r_{1}\right) \left( d_{1}^{\prime },r_{1}^{\prime
}\right) -\left( d_{1},r_{1}\right) \cdot \partial _{1_{\ast }}\left(
d_{1}^{\prime },r_{1}^{\prime }\right) .%
\end{array}%
\end{equation*}

\textbf{PL2:}%
\begin{equation*}
\begin{array}{rcl}
\left\{ \partial _{2_{\ast }}(d_{2},r_{2}),\partial _{2_{\ast
}}(d_{2}^{\prime },r_{2}^{\prime })\right\} & = & \left\{ (\partial
_{2}\left( d_{2}),r_{2}\right), (\partial _{2}\left( d_{2}^{\prime
}),r_{2}^{\prime }\right) \right\} \\
& = & \left( \left\{ \partial _{2}\left( d_{2}\right) ,\partial
_{2}(d_{2}^{\prime })\right\} ,r_{2}r_{2}^{\prime }\right) \\
& = & \left( d_{2}d_{2}^{\prime },r_{2}r_{2}^{\prime }\right) \\
& = & (d_{2},r_{2})(d_{2}^{\prime },r_{2}^{\prime })%
\end{array}%
\end{equation*}%
\noindent for all $ \left( d_{1},r_{1}\right),\left( d_{1}^{\prime
},r_{1}^{\prime }\right) \in \phi _{\ast }(D_{1}),\ \left(
d_{2},r_{2}\right),\left( d_{2}^{\prime },r_{2}^{\prime }\right) \in
\phi _{\ast }(D_{2}).$

The rest of axioms of $2$-crossed module is given in appendix.

(ii)\ It is clear that
\begin{equation*}
(\phi ^{\prime \prime },\phi ^{\prime },\phi ):\left\{
D_{2},D_{1},S,\partial _{2},\partial _{1}\right\} \rightarrow \left\{ \phi
_{\ast }(D_{2}),\phi _{\ast }(D_{1}),R,\partial _{2_{\ast }},\partial
_{1_{\ast }}\right\}
\end{equation*}%
or diagrammatically,%
\begin{equation*}
\xymatrix { D_2 \ar[r]^-{\phi^{\prime\prime}} \ar[d]_{\partial_{2}}&
\phi_*(D_2) \ar[d]^{\partial _{2_{\ast }}} & \\ \ D_1 \ar[r]^-{\phi^\prime}
\ar[d]_{\partial_{1}}& \phi_*(D_1) \ar[d]^{\partial _{1_{\ast }}} & \\ \ S
\ar[r]^-{\phi} & R & }
\end{equation*}%
is a morphism of $2$-crossed modules.

Suppose that%
\begin{equation*}
(f_{2},f_{1},\phi ):\left\{ D_{2},D_{1},S,\partial _{2},\partial
_{1}\right\} \rightarrow \left\{ B_{2},B_{1},R,\partial _{2}^{\prime
},\partial _{1}^{\prime }\right\}
\end{equation*}%
is any $2$-crossed module morphism. Then we will show that there is a $2$%
-crossed module morphism%
\begin{equation*}
(f_{2_{_\ast }},f_{1_{\ast }},id_{R}):\left\{ \phi _{\ast }(D_{2}),\phi
_{\ast }(D_{1}),R,\partial _{2_{\ast }},\partial _{1_{\ast }}\right\}
\rightarrow \left\{ B_{2},B_{1},R,\partial _{2}^{\prime },\partial
_{1}^{\prime }\right\}
\end{equation*}%
\begin{equation*}
\xymatrix { \phi_*(D_2) \ar[r]^{\partial _{2_{\ast }}}
\ar[d]_{f_2{_\ast}} & \phi_*(D_1) \ar[r]^{\partial _{1_{\ast }}}
\ar[d]_{f_1{_\ast}} & R \ar@{=}[d]_{id_R} & \\ \ B_2
\ar[r]_{\partial_{2}^{\prime }} & B_1 \ar[r]_{\partial_{1}^{\prime
}} & R & }
\end{equation*}%
where $f_{2_{\ast }}(d_{2},r_{2})=r_{2}\cdot f_{2}(d_{2})$\ and $f_{1_{\ast
}}(d_{1},r_{1})=r_{1}\cdot f_{1}(d_{1}).$ First we will check that $%
(f_{2_{\ast }},f_{1_{\ast }},id_{R})\ $is a $2$-crossed module morphism. We
can see this easily as follows:%
\begin{equation*}
\begin{array}{ccl}
f_{2_{\ast }}\left( r_{2}^{\prime }\cdot (d_{2},r_{2})\right) & = &
f_{2_{\ast }}(d_{2},r_{2}^{\prime }r_{2}) \\
& = & r_{2}^{\prime }\cdot \left( r_{2}\cdot f_{2}(d_{2})\right) \\
& = & r_{2}^{\prime }\cdot f_{2_{\ast }}(d_{2},r_{2}).%
\end{array}%
\end{equation*}%
Similarly $f_{1_{\ast }}\left( r_{1}^{\prime }\cdot (d_{1},r_{1})\right)
=r_{1}^{\prime }\cdot f_{1_{\ast }}(d_{1},r_{1}),$
\begin{equation*}
\begin{array}{ccl}
\left( f_{1_\ast }\partial _{2_\ast }\right) \left(
d_{2},r_{2}\right) & = &
f_{1_\ast }\left( \partial _{2_\ast }\left( d_{2},r_{2}\right) \right) \\
& = & f_{1_\ast }\left( \partial _{2}\left( d_{2}\right) ,r_{2}\right) \\
& = & r_{2}\cdot \left( f_{1}(\partial _{2}(d_{2}))\right) \\
& = & r_{2}\cdot \left( \partial _{2}^{\prime }\left( f_{2}\left(
d_{2}\right) \right) \right) \\
& = & \partial _{2}^{\prime }\left( r_{2}\cdot f_{2}\left( d_{2}\right)
\right) \\
& = & \left( \partial _{2}^{\prime }f_{2_{\ast }}\right) \left(
d_{2},r_{2}\right)%
\end{array}%
\end{equation*}%
and $\partial _{1}^{\prime }f_{1\ast }=id_{R}\partial _{1_{\ast }}$ for all $%
\left( d_{1},r_{1}\right) \in \phi _{\ast }(D_{1}),\left( d_{2},r_{2}\right)
\in \phi _{\ast }(D_{2}),r_{1}^{\prime },r_{2}^{\prime }\in R$ \ and%
\begin{equation*}
\left\{ -,-\right\} \left( f_{1\ast }\times f_{1\ast }\right) =f_{2}^{\ast
}\left\{ -,-\right\} .
\end{equation*}
\end{proof}

So we get the induced $2$-crossed module functor
\[
\phi _{\ast }:\mathsf{X}_{2}\mathsf{Mod}/S\rightarrow \mathsf{X}_{2}\mathsf{%
Mod}/R.
\]

We can give the following naturality condition for $\phi _{\ast }$ similar to remark \ref{natural}.
\begin{remark}
They satisfy the \textquotedblleft naturality condition\textquotedblright\ that
there is a natural equivalence of functors%
\[
\phi _{\ast }^{\prime }\phi _{\ast }\simeq \left( \phi ^{\prime }\phi
\right) _{\ast }.
\]

\end{remark}

\begin{theorem}\label{ug6}
For any morphism of $k$-algebras $\phi :S\rightarrow R,$ $\phi _{\ast }$ is
the left adjoint of $\phi ^{\ast }.$
\end{theorem}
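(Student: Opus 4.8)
The plan is to establish the adjunction $\phi_{\ast}\dashv\phi^{\ast}$ by exhibiting a natural bijection
\[
\mathsf{X}_{2}\mathsf{Mod}/R\bigl(\phi_{\ast}\{D_{2},D_{1},S,\partial_{2},\partial_{1}\},\{B_{2},B_{1},R,\partial_{2}^{\prime},\partial_{1}^{\prime}\}\bigr)\;\cong\;\mathsf{X}_{2}\mathsf{Mod}/S\bigl(\{D_{2},D_{1},S,\partial_{2},\partial_{1}\},\phi^{\ast}\{B_{2},B_{1},R,\partial_{2}^{\prime},\partial_{1}^{\prime}\}\bigr),
\]
where on the left the objects live over $R$ (so the base-component of every morphism is $id_{R}$) and on the right over $S$ (base-component $id_{S}$). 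First I would unwind the right-hand side using the pullback description from Proposition \ref{ug2}: since $\phi$ need not be a monomorphism here, $\phi^{\ast}\{B_{2},B_{1},R,\partial_{2}^{\prime},\partial_{1}^{\prime}\}$ is $\{B_{2},\phi^{\ast}(B_{1}),S,\partial_{2}^{\ast},\partial_{1}^{\ast}\}$ with $\phi^{\ast}(B_{1})=B_{1}\times_{R}S$; a morphism from $\{D_{2},D_{1},S,\partial_{2},\partial_{1}\}$ into it is a triple $(g_{2},g_{1},id_{S})$ which — as in the proof of Proposition \ref{ug2}, where $f_{1}^{\ast}(b_{1})=(f_{1}(b_{1}),\partial_{1}^{\prime}(b_{1}))$ — amounts to a pair of $k$-algebra maps $h_{2}:D_{2}\to B_{2}$, $h_{1}:D_{1}\to B_{1}$ together with the condition $\partial_{1}^{\prime}h_{1}=\phi\partial_{1}$ (this recovers the $S$-component of $g_{1}$), compatibility with the $S$-actions through $\phi$, and compatibility with the Peiffer liftings. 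In other words, the right-hand hom-set is naturally identified with morphisms of $2$-crossed modules $(h_{2},h_{1},\phi):\{D_{2},D_{1},S,\partial_{2},\partial_{1}\}\to\{B_{2},B_{1},R,\partial_{2}^{\prime},\partial_{1}^{\prime}\}$ covering $\phi$.

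Next I would analyse the left-hand side using the presentation of $\phi_{\ast}(D_{1})$ and $\phi_{\ast}(D_{2})$ by generators and relations given in Proposition \ref{ug3}. A morphism $(F_{2},F_{1},id_{R})$ out of the induced $2$-crossed module is determined by where it sends the generators $(d_{1},r)$ and $(d_{2},r)$; the defining relations $(s\cdot d_{1},r)=(d_{1},\phi(s)r)$, additivity and multiplicativity force $F_{1}(d_{1},r)=r\cdot F_{1}(d_{1},1_{R})$, so $F_{1}$ is completely determined by the $k$-algebra map $h_{1}:D_{1}\to B_{1}$, $h_{1}(d_{1}):=F_{1}(d_{1},1_{R})$, and likewise $F_{2}$ by $h_{2}:D_{2}\to B_{2}$, $h_{2}(d_{2}):=F_{2}(d_{2},1_{R})$ via $F_{2}(d_{2},r)=r\cdot h_{2}(d_{2})$ — this is exactly the formula $f_{i_{\ast}}(d_{i},r)=r\cdot f_{i}(d_{i})$ appearing in the proof of Proposition \ref{ug3}. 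Conversely, given any morphism $(h_{2},h_{1},\phi)$ covering $\phi$, the assignments $F_{i}(d_{i},r):=r\cdot h_{i}(d_{i})$ are well defined on $\phi_{\ast}(D_{i})$ precisely because $h_{i}$ respects the $S$-action through $\phi$ (so the relation $(s\cdot d_{i},r)\mapsto (d_{i},\phi(s)r)$ is honoured), respects addition and multiplication, and — for the Peiffer lifting, whose relations among the generators of $\phi_{\ast}(D_{2})$ were built in exactly so that $\{-,-\}$ descends — intertwines $\{-,-\}$ because $(h_{2},h_{1})$ does. One checks $\partial_{2}^{\prime}F_{2}=F_{1}\partial_{2_{\ast}}$, $\partial_{1}^{\prime}F_{1}=\partial_{1_{\ast}}$ and $R$-equivariance directly from the formulas, just as in Proposition \ref{ug3}(ii). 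Thus both hom-sets are naturally identified with the same set of morphisms covering $\phi$, and the composites of the two passages are mutually inverse.

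Finally I would record naturality: given morphisms $\{D_{2},D_{1},S,\dots\}\to\{D_{2}',D_{1}',S,\dots\}$ over $S$ and $\{B_{2},B_{1},R,\dots\}\to\{B_{2}',B_{1}',R,\dots\}$ over $R$, the bijection commutes with the induced maps on hom-sets because in every case the correspondence is ``read off the value on the generators $(d_{i},1_{R})$, which is $h_{i}(d_{i})$'', and this is manifestly functorial; alternatively one exhibits the unit $(\phi^{\prime\prime},\phi^{\prime},\phi):\{D_{2},D_{1},S,\dots\}\to\phi^{\ast}\phi_{\ast}\{D_{2},D_{1},S,\dots\}$ (the morphism of Proposition \ref{ug3}(ii) reinterpreted through the identification of the first paragraph) and the counit, and verifies the triangle identities on generators. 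The one genuinely delicate point — the ``main obstacle'' — is checking that the candidate map $F_{2}$ on $\phi_{\ast}(D_{2})$ is \emph{well defined}, i.e. that it respects \emph{all} the defining relations of Proposition \ref{ug3}, in particular the two relations expressing bi-additivity of the Peiffer lifting and the relation $(s\cdot d_{2},r)=(d_{2},\phi(s)r)$; this is where the fact that $(h_{2},h_{1},\phi)$ is a genuine morphism of $2$-crossed modules (hence compatible with $\{-,-\}$ and with the $S$-action via $\phi$) gets used in an essential way, and it is the reason the generators-and-relations presentation of $\phi_{\ast}$ was set up the way it was. Everything else is a routine diagram chase of the kind carried out in the proofs of Propositions \ref{ug2} and \ref{ug3}.
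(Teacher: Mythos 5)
Your proposal takes essentially the same route as the paper: both hom-sets are identified with the single set of morphisms $(f_{2},f_{1},\phi)$ covering $\phi$ — the hom-set into $\phi^{\ast}(\mathcal{B})$ over $S$ via the universal property established in Proposition \ref{ug2}, and the hom-set out of $\phi_{\ast}(\mathcal{D})$ over $R$ via the universal property established in Proposition \ref{ug3} — and the composite of these two bijections is the adjunction isomorphism. The paper's proof simply cites those two propositions and composes the bijections, so your extra work (well-definedness on the generators-and-relations presentation of $\phi_{\ast}$, and the naturality check) fills in details the paper leaves implicit rather than changing the argument.
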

\begin{proof}
From the naturality conditions given earlier, it is immediate that for
any $2$-crossed modules $\mathcal{D}=\left\{ D_{2},D_{1},S,\partial
_{2},\partial _{1}\right\} $ and $\mathcal{B}=\left\{ B_{2},B_{1},R,\partial
_{2},\partial _{1}\right\} $ there are bijections
\[
\left( \mathsf{X}_{2}\mathsf{Mod/}S\right) \left( \mathcal{D},\phi ^{\ast }\left(
\mathcal{B}\right) \right) \cong \{(f_{2},f_{1})\mid (f_{2},f_{1},\phi ):\mathcal{D}%
\rightarrow \mathcal{B}\text{ is a morphism in }\mathsf{X}_{2}\mathsf{Mod\}}
\]%
as proved in proposition \ref{ug2}, and%
\[
\left( \mathsf{X}_{2}\mathsf{Mod/}R\right) \left( \phi _{\ast }\left(
\mathcal{D}\right) ,\mathcal{B}\right) \cong \{(f_{2},f_{1})\mid
(f_{2},f_{1},\phi ):\mathcal{D}\rightarrow \mathcal{B}\text{ is a morphism
in }\mathsf{X}_{2}\mathsf{Mod\}}
\]%
as given in the proposition  \ref{ug3}.

Their composition gives the bijection needed for adjointness.
\end{proof}

Next if $\phi :S\rightarrow R,$ is an epimorphism the induced $2$-crossed
module has a simpler description.

\begin{proposition}
\label{ug4}Let $D_{2}\overset{\partial _{2}}{\rightarrow }D_{1}\overset{%
\partial _{1}}{\rightarrow }S$ is a $2$-crossed module, $\phi :S\rightarrow
R $ is an epimorphism with Ker$\phi =K.$ Then
\begin{equation*}
\phi _{\ast }(D_{2})\cong D_{2}/KD_{2}\qquad \text{and}\qquad \phi _{\ast
}(D_{1})\cong D_{1}/KD_{1},
\end{equation*}%
where $KD_{2}$ denotes the ideal of $D_{2}$ generated by $\left\{ k\cdot
d_{2}\mid k\in K,d_{2}\in D_{2}\right\} $ and $KD_{1}$ denotes the ideal of $%
D_{1}$ generated by $\left\{ k\cdot d_{1}\mid k\in K,d_{1}\in D_{1}\right\}
. $
\end{proposition}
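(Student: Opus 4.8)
The plan is to exhibit explicit isomorphisms of $2$-crossed modules and check they are well-defined, rather than re-derive the universal property from scratch. For the base case recall that for an epimorphism $\phi:S\to R$ with $\mathrm{Ker}\,\phi=K$, the induced crossed module of algebras is already known (from the work of Porter and of Shammu cited in the excerpt) to be $\phi_{\ast}(D_{1})\cong D_{1}/KD_{1}$, where $KD_{1}$ is the ideal generated by $\{k\cdot d_{1}\mid k\in K,\ d_{1}\in D_{1}\}$; so the first step is to see that this identification is compatible with the presentation of $\phi_{\ast}(D_{1})$ given in Proposition~\ref{ug3}. Concretely, I would define $\theta_{1}:\phi_{\ast}(D_{1})\to D_{1}/KD_{1}$ on generators by $\theta_{1}(d_{1},r)=\overline{d_{1}'}$, where $d_{1}'\in D_{1}$ is any element with $\phi\partial_{1}(d_{1}')=\ldots$—more cleanly, since $\phi$ is onto, pick $s\in S$ with $\phi(s)=r$ and set $\theta_{1}(d_{1},r)=\overline{s\cdot d_{1}}$. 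One checks this respects the three defining relations: multiplicativity is clear, additivity in the first slot is clear, and the relation $(s'\cdot d_{1},r)=(d_{1},\phi(s')r)$ holds because $s'\cdot d_{1}$ maps to $\overline{ss'\cdot d_{1}}$ regardless of which preimage $s$ of $r$ we chose (two choices differ by an element of $K$, and $K\cdot D_{1}\subseteq KD_{1}$ is killed). Independence of the choice of $s$ is exactly where one uses that we are in the quotient $D_{1}/KD_{1}$.

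Next I would write down the inverse $\eta_{1}:D_{1}/KD_{1}\to\phi_{\ast}(D_{1})$ by $\eta_{1}(\overline{d_{1}})=(d_{1},1)$ (using the identity of $R$), noting that it is well-defined because $(k\cdot d_{1},1)=(d_{1},\phi(k)\cdot 1)=(d_{1},0)=0$ for $k\in K$, so $KD_{1}$ maps to $0$. A short computation shows $\theta_{1}\eta_{1}=\mathrm{id}$ and $\eta_{1}\theta_{1}=\mathrm{id}$ on generators, hence they are mutually inverse algebra isomorphisms. The same recipe applies verbatim at the top level: define $\theta_{2}:\phi_{\ast}(D_{2})\to D_{2}/KD_{2}$ by $\theta_{2}(d_{2},r)=\overline{s\cdot d_{2}}$ for a chosen preimage $s$ of $r$, with inverse $\eta_{2}(\overline{d_{2}})=(d_{2},1)$, and check the five defining relations of $\phi_{\ast}(D_{2})$ from Proposition~\ref{ug3}; the two bilinearity relations for $\{-,-\}$ and the relation $(s\cdot d_{2},r)=(d_{2},\phi(s)r)$ are handled exactly as above.

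Finally I would verify that the pair $(\theta_{2},\theta_{1})$ is an isomorphism of $2$-crossed modules over $R$, i.e.\ that it commutes with $\partial_{2_{\ast}}$, $\partial_{1_{\ast}}$, with the $R$-actions, and with the Peiffer liftings. For $\partial_{1_{\ast}}$ one uses $\partial_{1_{\ast}}(d_{1},r)=\phi\partial_{1}(d_{1})r=\phi(\partial_{1}(s\cdot d_{1}))$ (since $\phi(s)=r$ and $\partial_{1}$ is $S$-equivariant), which is precisely the map $D_{1}/KD_{1}\to R$ induced by $\phi\partial_{1}$; note $\partial_{1}(KD_{1})\subseteq K$ so this is well-defined. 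For $\partial_{2_{\ast}}$ and the action $(d_{1},r_{1})\cdot(d_{2},r_{2})=(d_{1}\cdot d_{2},r_{2})$, compatibility reduces to the $S$-equivariance of $\partial_{2}$ and of the action of $D_{1}$ on $D_{2}$. The Peiffer lifting $\{(d_{1},r_{1}),(d_{1}',r_{1}')\}=(\{d_{1},d_{1}'\},r_{1}r_{1}')$ transports under $\theta$ to the lifting induced on the quotients by $\{-,-\}:D_{1}\times D_{1}\to D_{2}$, using PL5 to see that $\{KD_{1},D_{1}\}\subseteq KD_{2}$ so the induced lifting is defined. I expect the main obstacle to be bookkeeping rather than conceptual: one must be careful that ``choose a preimage $s$ of $r$'' is genuinely independent of the choice modulo $KD_{i}$, and that every ideal appearing—$KD_{1}$, $KD_{2}$, $\partial_{2}(KD_{2})$ inside $KD_{1}$, $\{KD_{1},D_{1}\}$ inside $KD_{2}$—really is killed by the relevant map; each of these is a one-line consequence of $P$-equivariance or of axioms PL1--PL5, but they all need to be listed.
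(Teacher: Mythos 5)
Your proposal is correct in substance, but it takes a genuinely different route from the paper. You identify $\phi_{\ast}(D_{i})$ with $D_{i}/KD_{i}$ by explicit mutually inverse maps built against the generators-and-relations presentation of Proposition \ref{ug3} ($\theta_{i}(d_{i},r)=\overline{s\cdot d_{i}}$ for a chosen preimage $s$ of $r$, $\eta_{i}(\overline{d_{i}})=(d_{i},1_{R})$). The paper never manipulates that presentation in this proof: it shows directly that $D_{2}/KD_{2}\rightarrow D_{1}/KD_{1}\rightarrow R$ is a $2$-crossed module (PL1, PL2 in the text, the remaining axioms in the appendix), that $(\phi^{\prime \prime },\phi^{\prime },\phi)$ is a morphism over $\phi$, and that any morphism $(f_{2},f_{1},\phi)$ into a $2$-crossed module over $R$ factors uniquely through the quotient via $(f_{2\ast},f_{1\ast},id_{R})$ with $f_{2\ast}\phi^{\prime \prime }=f_{2}$, $f_{1\ast}\phi^{\prime }=f_{1}$; in other words it re-verifies the universal property defining the induced $2$-crossed module, and the isomorphisms of the statement then follow from uniqueness of universal objects. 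Your route buys a concrete isomorphism and spares you the factorization argument, but it inherits essentially the same content: well-definedness of the quotient structure is exactly the stability you list ($\partial_{2}(KD_{2})\subseteq KD_{1}$, $\partial_{1}(KD_{1})\subseteq K$, $\{KD_{1},D_{1}\}\subseteq KD_{2}$ via PL5 and equivariance), and the check that $\theta_{2}$ respects the two bi-additivity relations for the lifting in Proposition \ref{ug3} should be written out rather than asserted. Note also that $\theta_{1}\eta_{1}=\mathrm{id}$ silently requires a preimage of $1_{R}$ acting as the identity modulo $KD_{1}$, i.e.\ unital $S$, unital $\phi$ and unital actions---harmless under the paper's standing conventions, but it should be stated. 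Finally, the universal factorization the paper establishes is the form in which Proposition \ref{ug4} is used later (cocartesian liftings, the freeness application), so on your approach that property would still need to be transported across your isomorphism.
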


\begin{proof}
As $\phi :S\rightarrow R$ is an epimorphism, $R\cong S/K.$ Since $K$ acts trivially on $%
D_{2}/KD_{2},D_{1}/KD_{1},$ $R\cong S/K$ acts
on $D_{2}/KD_{2}$ by $r\cdot \left( d_{2}+KD_{2}\right) =\left( s+K\right)
\cdot \left( d_{2}+KD_{2}\right) =s\cdot d_{2}+KD_{2}$ and $R\cong S/K$ acts
on $D_{1}/KD_{1}$ by $r\cdot \left( d_{1}+KD_{1}\right) =\left( s+K\right)
\cdot \left( d_{1}+KD_{1}\right) =s\cdot d_{1}+KD_{1}.$
\begin{equation*}
D_{2}/KD_{2}\overset{\partial _{2\ast }}{\rightarrow }D_{1}/KD_{1}\overset{%
\partial _{1\ast }}{\rightarrow }R
\end{equation*}%
is a $2$-crossed module where
\begin{equation*}
\partial _{2\ast }(d_{2}+KD_{2})=\partial _{2}(d_{2})+KD_{1},\partial
_{1\ast }(d_{1}+KD_{1})=\partial _{1}(d_{1})+K
\end{equation*}%
and $D_{1}/KD_{1}$ acts on $D_{2}/KD_{2}$ by $(d_{1}+KD_{1})\cdot
(d_{2}+KD_{2})=d_{1}\cdot d_{2}+KD_{2}.$ As
\begin{equation*}
\partial _{1_{\ast }}\left( \partial _{2_{\ast }}(d_{2}+KD_{2})\right)
=\partial _{1_{\ast }}(\partial _{2}(d_{2})+KD_{1})=\partial _{1}\left(
\partial _{2}(d_{2})\right) +K=0+K\cong 0_{R},
\end{equation*}%
$D_{2}/KD_{2}\overset{\partial _{2\ast }}{\rightarrow }D_{1}/KD_{1}\overset{%
\partial _{1\ast }}{\rightarrow }R$ is a complex of $k$-algebras.

The Peiffer lifting%
\begin{equation*}
\left\{ -,-\right\} :D_{1}/KD_{1}\times D_{1}/KD_{1}\rightarrow
D_{2}/KD_{2}
\end{equation*}%
is given by $\left\{ d_{1}+KD_{1},d_{1}^{\prime }+KD_{1}\right\} =\left\{
d_{1},d_{1}^{\prime }\right\} +KD_{2}.$

\textbf{PL1:}
\\
\\
$
\begin{array}{rl}
& \partial _{2_{\ast }}\left\{ d_{1}+KD_{1},d_{1}^{\prime }+KD_{1}\right\}
\\
= & \partial _{2_{\ast }}\left( \left\{ d_{1},d_{1}^{\prime }\right\}
+KD_{2}\right)  \\
= & \partial _{2}(\left\{ d_{1},d_{1}^{\prime }\right\} )+KD_{1} \\
= & \left( d_{1}d_{1}^{\prime }-d_{1}\cdot \partial _{1}(d_{1}^{\prime
})\right) +KD_{1} \\
= & \left( d_{1}d_{1}^{\prime }+KD_{1}\right) -\left( d_{1}\cdot \partial
_{1}(d_{1}^{\prime })+KD_{1}\right)  \\
= & \left( d_{1}d_{1}^{\prime }+KD_{1}\right) -\left( d_{1}+KD_{1}\right)
\cdot \left( \partial _{1}(d_{1}^{\prime })+K\right)  \\
= & \left( d_{1}+KD_{1}\right) \left( d_{1}^{\prime }+KD_{1}\right) -\left(
d_{1}+KD_{1}\right) \cdot \partial _{1_{\ast }}(d_{1}^{\prime }+KD_{1}).%
\end{array}%
$
\\

\textbf{PL2:}%
\begin{equation*}
\begin{array}{ccl}
\left\{ \partial _{2_\ast }\left( d_{2}+KD_{2}\right) ,\partial
_{2_\ast }\left( d_{2}^{\prime }+KD_{2}\right) \right\} & = &
\left\{
\partial
_{2}(d_{2})+KD_{1},\partial _{2}(d_{2}^{\prime })+KD_{1}\right\} \\
& = & \left\{ \partial _{2}(d_{2}),\partial _{2}(d_{2}^{\prime })\right\}
+KD_{2} \\
& = & d_{2}d_{2}^{\prime }+KD_{2} \\
& = & \left( d_{2}+KD_{2}\right) \left( d_{2}^{\prime }+KD_{2}\right) .%
\end{array}%
\end{equation*}%
The rest of axioms of $2$-crossed module is given by in appendix.
\begin{equation*}
(\phi ^{\prime \prime },\phi ^{\prime },\phi ):\left\{
D_{2},D_{1},S,\partial _{2},\partial _{1}\right\} \rightarrow
\left\{ D_{2}/KD_{2},D_{1}/KD_{1},R,\partial _{2_\ast },\partial
_{1_{_\ast }}\right\}
\end{equation*}%
or diagrammatically,%
\begin{equation*}
\xymatrix { D_2 \ar[r]^{\phi^{\prime\prime}} \ar[d]_{\partial_{2}}&
D_2/KD_2 \ar[d]^{\partial _{2_{\ast }}} & \\ \ D_1
\ar[r]^{\phi^{\prime}}
\ar[d]_{\partial_{1}}& D_1/ KD_1 \ar[d]^{\partial _{1_{\ast }}} & \\
\ S \ar[r]^\phi & R & }
\end{equation*}%
is a morphism of $2$-crossed modules. (See appendix.)

Suppose that%
\begin{equation*}
(f_{2},f_{1},\phi ):\left\{ D_{2},D_{1},S,\partial _{2},\partial
_{1}\right\} \rightarrow \left\{ B_{2},B_{1},R,\partial _{2}^{\prime
},\partial _{1}^{\prime }\right\}
\end{equation*}%
is any $2$-crossed module morphism. Then we will show that there is a unique
$2$-crossed module morphism%
\begin{equation*}
(f_{2_\ast },f_{1_{\ast }},id_{R}):\left\{
D_{2}/KD_{2},D_{1}/KD_{1},R,\partial _{2_\ast },\partial _{1_\ast
}\right\} \rightarrow \left\{ B_{2},B_{1},R,\partial _{2}^{\prime
},\partial _{1}^{\prime }\right\}
\end{equation*}%
\begin{equation*}
\xymatrix { D_2/KD_2 \ar[r]^{\partial _{2_{\ast }}}
\ar[d]_{f_2{_\ast}} & D_1/KD_1 \ar[r]^{\partial _{1_\ast }}
\ar[d]_{f_1{_\ast}} & R \ar@{=}[d]_{id_R} & \\ \ B_2
\ar[r]_{\partial_{2}^{\prime}} & B_1 \ar[r]_{\partial_{1}^{\prime}}
& R & }
\end{equation*}%
where $f_{2_\ast }(d_{2}+KD_{2})=f_{2}(d_{2})$ and $f_{1_\ast
}(d_{1}+KD_{1})=f_{1}(d_{1})$. Since
\begin{equation*}
f_{2}(k\cdot d_{2})=\phi (k)\cdot f_{2}(d_{2})=0_{R}\cdot
f_{2}(d_{2})=0_{B_{2}}
\end{equation*}%
and\ similarly $f_{1}(d_{1}+KD_{1})=0_{B_{1}},$ $f_{2}(KD_{2})=0_{B_{2}}$%
\ and $f_{1}(KD_{1})=0_{B_{1}},\ f_{2_\ast }$ and $f_{1_\ast }$ are well
defined.

First let us check that $(f_{2_\ast },f_{1_\ast },id_{R})$ is a
$2$-crossed module morphism. For $d_{2}+KD_{2}\in
D_{2}/KD_{2},d_{1}+KD_{1}\in
D_{1}/KD_{1}$ and $r\in R,$%
\begin{equation*}
\begin{array}{rl}
f_{2_\ast }\left( r\cdot (d_{2}+KD_{2})\right) & =f_{2_\ast }\left(
\left( s+K\right) \cdot (d_{2}+KD_{2})\right) \\
& =f_{2_\ast }((s\cdot d_{2})+KD_{2}) \\
& =f_{2}(s\cdot d_{2}) \\
& =\phi (s)\cdot f_{2}(d_{2}) \\
& =s\cdot f_{2}(d_{2}) \\
& =\left( s+K\right) \cdot f_{2_\ast }(d_{2}+KD_{2}) \\
& =r\cdot f_{2_{\ast }}(d_{2}+KD_{2})%
\end{array}%
\end{equation*}%
similarly $f_{1_\ast }\left( r\cdot (d_{1}+KD_{1})\right) =r\cdot
f_{1_\ast }(d_{1}+KD_{1})$,
\begin{equation*}
\begin{array}{rl}
f_{1_\ast }\partial _{2_\ast }(d_{2}+KD_{2}) & =f_{1_\ast }(\partial
_{2}\left( d_{2}\right)+KD_{2}) \\
& =f_{1}\left( \partial _{2}\left( d_{2}\right) \right) \\
& =\partial _{2}^{\prime }\left( f_{2}\left( d_{2}\right) \right) \\
& =\partial _{2}^{\prime }f_{2_\ast }\left( d_{2}+KD_{2}\right)%
\end{array}%
\end{equation*}%
similarly $\partial _{1}^{\prime }f_{1_\ast }=id_{R}\partial
_{1_{\ast }}$
and%
\begin{equation*}
\begin{array}{cl}
f_{2_{\ast }}\{-,-\}\left( d_{1}+KD_{1},d_{1}^{\prime }+KD_{1}\right) &
=f_{2_{\ast }}\{d_{1}+KD_{1},d_{1}^{\prime }+KD_{1}\} \\
& =f_{2_{\ast }}\left( \{d_{1},d_{1}^{\prime }\}+KD_{2}\right) \\
& =f_{2}\{d_{1},d_{1}^{\prime }\} \\
& =f_{2}\{-,-\}(d_{1},d_{1}^{\prime }) \\
& =\{-,-\}\left( f_{1}\times f_{1}\right) (d_{1},d_{1}^{\prime }) \\
& =\left\{ f_{1}(d_{1}),f_{1}(d_{1}^{\prime })\right\} \\
& =\left\{ f_{1_{\ast }}(d_{1}+KD_{1}),f_{1_{\ast }}(d_{1}+KD_{1})\right\}
\\
& =\{-,-\}\left( f_{1_{\ast }}\times f_{1_{\ast }}\right) \left(
d_{1}+KD_{1},d_{1}^{\prime }+KD_{1}\right) .%
\end{array}%
\end{equation*}%
So $(f_{2_\ast },f_{1_\ast },id_{R})$ is a morphism of $2$-crossed
modules.
Furthermore; following equations are verified.%
\begin{equation*}
f_{2_\ast }\phi ^{\prime \prime }=f_{2}\text{ and }f_{1_\ast }\phi
^{\prime }=f_{1}.
\end{equation*}%
So given any morphism of $2$-crossed modules%
\begin{equation*}
(f_{2},f_{1},\phi ):\left\{ D_{2},D_{1},S,\partial _{2},\partial
_{1}\right\} \rightarrow \left\{ B_{2},B_{1},R,\partial _{2}^{\prime
},\partial _{1}^{\prime }\right\}
\end{equation*}%
then there is a unique $(f_{2_\ast },f_{1_\ast },id_{R})$
$2$-crossed module morphism that commutes the following diagram:
\begin{equation*}
\xymatrix { (D_2, D_1, S, \partial_2, \partial_1)
\ar[drr]^{(\phi^{\prime \prime},\phi^{\prime},\phi)} \ar[d]_{(f_2,f_1,\phi)} & & & \\
\ (B_2, B_1, R, \partial _{2}^{\prime }, \partial _{1}^{\prime }) &
& (D_2/KD_2, D_1/KD_1, R,
\partial_{2_{\ast}}, \partial_{1_\ast}) \ar@{-->}[ll]^-{(f_{2_\ast},f_{1_\ast},id_R)}& }
\end{equation*}%
or more simply as%
\[
\xymatrix @R=20pt@C=25pt { & & & B_2  \ar[dd]^{\partial _{2}^{\prime
}} & \\
\ D_2  \ar[dd]_{\partial_2} \ar[rr]_-{\phi^{\prime \prime}} \ar@/^/[urrr]^{f_2} &  & D_2/KD_2 \ar@{-->}[ur]_{f_2{_\ast}} \ar[dd]^{\partial_2{_\ast}} &  & \\
\ & & & B_1  \ar[dd]^{\partial _{1}^{\prime
}} & \\
\ D_1 \ar[dd]_{\partial_1} \ar[rr]_-{\phi^\prime} \ar@/^/[urrr]^{f_1} &  & D_1/KD_1 \ar@{-->}[ur]_{f_{1_\ast}} \ar[dd]^{\partial_1{_\ast}} & & \\
\  & & & R  & \\
\  S \ar@/^/[urrr]^\phi \ar[rr]_\phi &  & R. \ar@{=}[ur]_{id_R}& & }
\]
\end{proof}

\section{Fibrations and Cofibrations of Categories}\label{s1}
The notion of fibration of categories is intended to give a general
background to constructions analogous to pullback by a morphism. It
seems to be a very useful notion for dealing with hierarchical
structures. A functor which forgets the top level of structure is
often usefully seen as a fibration or cofibration of categories.

We rewrite from \cite{[bs]} and \cite{Vistoli} the definition of fibration and cofibration of categories and some propositions.
\begin{definition} Let $ \Phi :\mathbf{X}\to \mathbf{B} $ be a functor. A morphism $ \varphi :Y\to
X $ in $ \mathbf{X} $ over $ u := \Phi (\varphi) $ is called
cartesian if and only if for all $ v: K\to J $ in $ \mathbf{B} $ and
$ \theta:Z\to X $ with $ \Phi(\theta)=uv $ there is a unique
morphism $ \psi: Z\to Y $ with $ \Phi(\psi)=v $ and $
\theta=\varphi\psi . $

This is illustrated by the following diagram:
\[
\xymatrix @R=10pt@C=10pt{ Z \ar[dd] \ar@{-->}[dr]_\psi \ar@/^/[drrr]^\theta & & & & \\
\ & Y \ar[dd] \ar[rr]_\varphi & & X \ar[dd] & \\
\ \Phi(Z) \ar[dr]_v \ar@/^/[drrr]^{uv} & & & &\\
\ & \Phi(Y) \ar[rr]_u & & \Phi(X). & }
\]

If $Y\rightarrow X$ is a cartesian arrow of \textbf{X} mapping to an
arrow $\Phi \left( Y\right) \rightarrow \Phi \left( X\right) $ of
\textbf{B}, we also say that Y is a pullback of X to $\Phi \left(
Y\right) $

A morphism $ \alpha :Z\to Y  $ is called vertical (with respect to $
\Phi $) if and only if $ \Phi(\alpha) $  is an identity morphism in
$ \mathbf{B}. $ In particular, for $ I\in \mathbf{B}$ we write
$\mathbf{X}$/I , called the fibre over $ I $, for the subcategory of
$ \mathbf{X} $ consisting of those morphisms $ \alpha $ with $
\Phi(\alpha)=id_I . $
\end{definition}

\begin{definition}
The functor $ \Phi :\mathbf{X}\to \mathbf{B} $ is a fibration or
category fibred over $ \mathbf{B} $ if and only if for all $ u: J\to
I $ in $ \mathbf{B} $ and $ X\in\mathbf{X}$/I there is a cartesian
morphism $\varphi: Y\to X $ over $u$. Such a $ \varphi $ is called a
cartesian lifting of  $ X $ along $ u. $
\end{definition}

In other words, in a category fibred over $\mathbf{B}$, $ \Phi
:\mathbf{X}\to \mathbf{B} $, we can pull back objects of
$\mathbf{X}$ along any arrow of $\mathbf{B}$.

Notice that cartesian liftings of X $\in X/I$ along $u:J\rightarrow I$ are
unique up to vertical isomorphism:
given two pullbacks $\varphi :Y\rightarrow X$ and $\bar{\varphi}:\bar{Y}%
\rightarrow X$ of $X$ to I,
 the unique arrow $\theta :\bar{Y}\rightarrow Y$ that fits into the diagram%
\[
\xymatrix @R=10pt@C=10pt{ \bar{Y} \ar[dd] \ar@{-->}[dr] \ar@/^/[drrr] & & & & \\
\ & Y \ar[dd] \ar[rr] & & X \ar[dd] & \\
\ J \ar@{=}[dr] \ar@/^/[drrr]& & & &\\
\ & J \ar[rr] & & I & }
\]

\noindent
is an isomorphism; the inverse is the arrow $Y\rightarrow \bar{Y}$ obtained
by exchanging $Y$ and $\bar{Y}$ in the diagram above.\
In other words, a pullback is unique, up to a unique isomorphism.

The following results in the case of crossed modules of groupoids
and commutative algebras have appeared in \cite{[bs]} and \cite{ag},
respectively.

\begin{proposition} \label{g2}
The forgetful functor $p:\mathsf{X}_{2}\mathsf{Mod}\rightarrow \mathsf{k}$%
\textsf{-Alg } which sends $\{ C_{2},C_{1},$ \\ $ R,\partial
_{2},\partial _{1}\}\mapsto R$ is fibred.
\end{proposition}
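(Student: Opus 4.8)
The plan is to verify directly that $p$ satisfies the definition of a fibration: given any $k$-algebra morphism $\phi : S \to R$ and any object $\mathcal{C} = \{C_2, C_1, R, \partial_2, \partial_1\}$ in the fibre $p^{-1}(R) = \mathsf{X}_2\mathsf{Mod}/R$, I must produce a cartesian lifting of $\mathcal{C}$ along $\phi$. The natural candidate is the pullback $2$-crossed module. However, Proposition \ref{ug2} constructs $\phi^\ast(\mathcal{C})$ as an object of $\mathsf{X}_2\mathsf{Mod}/S$ only when $\phi$ is a monomorphism, so the first thing I would do is extend that construction to an arbitrary morphism $\phi$. The idea is to factor $\phi$ as an epimorphism followed by a monomorphism, or more directly to take the object over $S$ obtained from the general pullback construction at the $C_1$-level, namely $\phi^\ast(C_1) = C_1 \times_R S$ with $\partial_1^\ast(c_1,s) = s$, and at the top level use $C_2$ itself (since in the $2$-crossed module one wants a complex over $S$, and $\partial_1^\ast\partial_2^\ast$ must vanish). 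Concretely I would set $\phi^\ast(\mathcal{C}) = \{C_2,\ \phi^\ast(C_1),\ S,\ \partial_2^\ast,\ \partial_1^\ast\}$ with $\partial_2^\ast(c_2) = (\partial_2(c_2), 0)$ and the Peiffer lifting $\{(c_1,s),(c_1',s')\} = \{c_1,c_1'\}$, exactly as in the monomorphism case; the point is that $\partial_1^\ast\partial_2^\ast(c_2) = 0$ holds because the second coordinate is forced to $0$, so one does not actually need $\phi$ injective for this particular variant.

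Next I would define the canonical morphism $(\mathrm{id}_{C_2}, \phi', \phi) : \phi^\ast(\mathcal{C}) \to \mathcal{C}$ in $\mathsf{X}_2\mathsf{Mod}$, with $\phi'(c_1,s) = c_1$, lying over $\phi$; this is the candidate cartesian arrow. I would then check the cartesian property: given any $v : T \to S$ in $\mathsf{k}$-$\mathsf{Alg}$ and any morphism $\theta = (\theta_2,\theta_1,\phi v) : \mathcal{D} \to \mathcal{C}$ in $\mathsf{X}_2\mathsf{Mod}$ with $\mathcal{D} = \{D_2,D_1,T,\partial_2',\partial_1'\}$, I must produce a unique $\psi : \mathcal{D} \to \phi^\ast(\mathcal{C})$ over $v$ with $(\mathrm{id},\phi',\phi)\circ\psi = \theta$. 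The formula is forced by the same universal-property computation already carried out in the proof of Proposition \ref{ug2}: set $\psi_1(d_1) = (\theta_1(d_1),\ v\,\partial_1'(d_1))$ — which lies in $\phi^\ast(C_1)$ precisely because $\partial_1(\theta_1(d_1)) = \phi v (\partial_1'(d_1))$ by the morphism condition on $\theta$ — and $\psi_2(d_2) = \theta_2(d_2)$, noting $\theta_2(d_2) \in C_2$ as required; uniqueness is immediate since the first coordinate of $\psi_1$ is determined by $\phi'\psi_1 = \theta_1$ and the second by $\partial_1^\ast\psi_1 = v\,\partial_1'$. The verification that $\psi$ is a genuine $2$-crossed module morphism (compatibility with actions, the $\partial$'s, and the Peiffer liftings) is a routine repetition of the displayed calculations in the proof of Proposition \ref{ug2}, with $\mathrm{id}_S$ replaced by $v$.

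I expect the main obstacle to be bookkeeping rather than conceptual: one must be careful that the candidate $\phi^\ast(\mathcal{C})$ really is a $2$-crossed module over $S$ for arbitrary $\phi$ (the discussion in the text around Proposition \ref{ug2} warns that the naive pullback of the whole complex fails to be a complex unless $\phi$ is mono, which is exactly why I replace the top term by $C_2$ rather than by $\phi^\ast(C_2)$), and that the Peiffer axioms \textbf{PL1}--\textbf{PL5} survive — but these are the same checks appearing in the appendix referenced after Proposition \ref{ug2}. The only genuinely new content beyond Proposition \ref{ug2} is allowing the base change of the target morphism $\theta$ to factor as $\phi v$ with $v$ not an identity, which costs nothing since $v$ never needs to be inverted. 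Hence $p$ admits cartesian liftings along every arrow of $\mathsf{k}$-$\mathsf{Alg}$, so $p$ is a fibration, and by the uniqueness-up-to-vertical-isomorphism remark preceding the proposition these liftings are essentially unique.
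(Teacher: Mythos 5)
Your proposal is correct, and at bottom it uses the same mechanism as the paper: the canonical morphism $(\mathrm{id}_{C_2},\phi',\phi):\{C_2,\phi^{\ast}(C_1),S,\partial_2^{\ast},\partial_1^{\ast}\}\rightarrow\{C_2,C_1,R,\partial_2,\partial_1\}$ coming from the pullback construction is exhibited as the cartesian lifting. The differences are in scope, and they work in your favour. The paper's proof simply invokes Proposition \ref{ug2}, which is stated (and justified, via the identification $\phi^{\ast}(C_2)\cong C_2\times \mathrm{Ker}\,\phi$) only for a monomorphism $\phi$, and the surrounding text keeps that restriction; taken literally this produces cartesian liftings only along monomorphisms, which is less than the fibration property as the paper itself defines it (liftings along \emph{all} arrows of $\mathsf{k}$-$\mathsf{Alg}$). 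You remove the restriction by the right observation: do not take the iterated pullback $\phi^{\ast}(C_2)$ at the top, but $C_2$ itself with $\partial_2^{\ast}(c_2)=(\partial_2(c_2),0)$, so that $\partial_1^{\ast}\partial_2^{\ast}=0$ is automatic and the verifications of \textbf{PL1}--\textbf{PL5} go through verbatim for arbitrary $\phi$, since each of them only uses $\partial_1(c_1)=\phi(s)$ for $(c_1,s)\in\phi^{\ast}(C_1)$, never injectivity of $\phi$. Second, the paper checks the cartesian condition only against test morphisms $(f_2,f_1,\phi)$ whose source already lies over $S$, i.e.\ with $v=\mathrm{id}_S$, which is the weak (pre)cartesian condition rather than the one in the paper's own Definition; you verify the full condition with an arbitrary $v:T\rightarrow S$, and your formulas $\psi_1(d_1)=(\theta_1(d_1),v\partial_1'(d_1))$, $\psi_2=\theta_2$ are the right ones, with uniqueness forced coordinatewise by $\phi'\psi_1=\theta_1$ and $\partial_1^{\ast}\psi_1=v\partial_1'$. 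So your argument both fills the monomorphism gap and establishes the stronger cartesian property; the paper's version is shorter because it leans entirely on Proposition \ref{ug2}, but as written it proves less than the proposition asserts.
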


\begin{proof}
It is enough to consider the pullback construction from proposition \ref{ug2} to prove
that $p$ is a fibred. Thus the morphism $(id_{C_{2}},\phi ^{\prime },\phi
):\left\{ C_{2},\phi ^{\ast }(C_{1}),S,\partial _{2}^{\ast
},\partial _{1}^{\ast }\right\}
$ \\ $ \rightarrow \left\{ C_{2},C_{1},R,\partial _{2},\partial _{1}\right\} $ of $%
\mathsf{X}_{2}\mathsf{Mod}$ is cartesian. Because for any morphism
\[
(f_{2},f_{1},\phi ):\left\{ B_{2},B_{1},S,\partial _{2}^{\prime
},\partial _{1}^{\prime }\right\} \rightarrow \left\{
C_{2},C_{1},R,\partial _{2},\partial _{1}\right\}
\]
in $\mathsf{X}_{2}\mathsf{Mod}$ and any morphism
\[
id_{S}:p\left( \left\{ B_{2},B_{1},S,\partial _{2}^{\prime
},\partial _{1}^{\prime }\right\} \right) \rightarrow p\left(
\left\{ C_{2},\phi ^{\ast }(C_{1}),S,\partial _{2}^{\ast },\partial
_{1}^{\ast }\right\} \right)
\]
\noindent
in \textsf{k-Alg} with $p(id_{C_{2}},\phi ^{\prime},\phi)\circ id_{S}=p(f_{2},f_{1},\phi ),$ there exists a unique arrow $%
(f_{2}^{\ast },f_{1}^{\ast },id_{S})$ with $p(f_{2}^{\ast
},f_{1}^{\ast },id_{S})=id_{S}$ and $(id_{C_{2}},\phi ^{\prime},\phi
)\circ (f_{2}^{\ast},f_{1}^{\ast },id_{S})=(f_{2},f_{1},\phi )$ as
in the commutative diagram
\[
 \xymatrix @R=15pt@C=20pt{ \left\{ B_{2},B_{1},S,\partial _{2}^{\prime
},\partial
_{1}^{\prime }\right\} \ar[dd]_p \ar@{-->}[dr]_-{(f_{2}^{\ast },f_{1}^{\ast },id_{S})} \ar@/^/[drrr]^-{(f_{2},f_{1},\phi )} & & &  \\
\ & \left\{ C_{2},\phi ^{\ast }(C_{1}),S,\partial _{2}^{\ast
},\partial _{1}^{\ast }\right\}
 \ar[dd]_p \ar[rr]_-{(id_{C_{2}},\phi ^{\prime },\phi )} & & \left\{ C_{2},C_{1},R,\partial
_{2},\partial _{1}\right\} \ar[dd]_p & \\
\ S \ar@{=}[dr]_{id_S} \ar@/^/[drrr]^\phi & & &\\
\ & S \ar[rr]_\phi  &  & R. }
\]
\end{proof}

In considering the functor $p:\textsf{X}_{2}\textsf{Mod} \rightarrow %
\textsf{k-Alg}$ as a fibration, if we fix the monomorphism $\phi
:S\rightarrow R$ \ in \textsf{k-Alg},  the cartesian lifting $\left\{ C_{2},\phi ^{\ast }\left( C_{1}\right) ,R,\partial _{2},\partial
_{1}\right\} \\ \rightarrow \left\{ C_{2},C_{1},R,\partial
_{2},\partial _{1}\right\} $ \ of \ $\left\{ C_{2},C_{1},R,\partial
_{2},\partial _{1}\right\} $ along $\phi $ \ for \ $\left\{
C_{2},C_{1},R,\partial _{2},\partial _{1}\right\} \in
\mathsf{X}_{2}\mathsf{Mod}/R$, \ gives a so-called
reindexing functor%
\[
\phi ^{\ast }:\mathsf{X}_{2}\mathsf{Mod/}R\rightarrow \mathsf{X}_{2}\mathsf{%
Mod/}S
\]%
given in section 2 and defined as objects by $\left\{
C_{2},C_{1},R,\partial _{2},\partial _{1}\right\} \mapsto \{
C_{2},\phi^\ast(C_{1}),R,$ \\ $\partial _{2},\partial _{1}\} $ and
the image of a morphism $\phi ^{\ast }(\alpha ,\beta ,id_{R})=\left(
\alpha ,\phi ^{\ast }(\beta ),id_{S}\right) $ the unique arrow
commuting the quadrangles in the following
diagram:%
\[
\xymatrix @R=10pt@C=10pt{ C_2 \ar[dd] \ar@{-->}[dr]^\alpha \ar@{=}[rr] &  & C_2 \ar[dd] \ar[drr]^\alpha & & & \\
\ & {C_2}^\prime \ar[dd] \ar@{=}[rrr] &  & & {C_2}^\prime \ar[dd]  &  \\
\ \phi^\ast(C_1) \ar[dd] \ar@{-->}[dr]^{\phi^\ast({\beta})} \ar[rr] &  & C_1 \ar[dd] \ar[drr]^\beta & & & \\
\ & \phi^\ast({C_1}^\prime) \ar[dd]  \ar[rrr] & & & {C_1}^\prime \ar[dd] &  \\
\ S \ar@{-->}[dr]^{id_S} \ar[rr] &  & R \ar@{=}[drr]^{id_R} & & & \\
\ & S \ar[rrr]_\phi &  & & R. &  }
\]

We can use this reindexing functor to get an adjoint situation for each
monomorphism $\phi :S\rightarrow R$ in \textsf{k}-\textsf{Alg}.

\begin{proposition}
Let  $p:$ \textsf{X}$_{2}$\textsf{Mod}$\rightarrow $\textsf{k}-\textsf{Alg }%
be the forgetful functor with $p \{C_{2},C_{1},R,$ \\  $\partial _{2}, \partial _{1}\} \mapsto R$, monomorphism $\phi :S\rightarrow R$ be
in \textsf{k}-\textsf{Alg}, and $\phi ^{\ast }:\mathsf{X}_{2}\mathsf{Mod/}%
R\rightarrow \mathsf{X}_{2}\mathsf{Mod/}S$ be reindexing functor, pullback.
Then there is a bijection
\[
\mathsf{X}_{2}\mathsf{Mod/}S\left( \mathcal{B},\phi ^{\ast }\left( \mathcal{C%
}\right) \right) \cong \mathsf{X}_{2}\mathsf{Mod/}\phi \left( \mathcal{B},%
\mathcal{C}\right)
\]%
natural in $\mathcal{B}\in \mathsf{X}_{2}\mathsf{Mod/}S,\ \mathcal{C}\in \mathsf{X}_{2}\mathsf{Mod/}R$ where $\mathcal{B}=\left\{ B_{2},B_{1},S,\partial _{2}^{\prime },\partial_{1}^{\prime }\right\},\ \mathcal{C}=\left\{ C_{2},C_{1},R,\partial
_{2},\partial _{1}\right\} $ and $\mathsf{X}%
_{2}\mathsf{Mod/}\phi \left( \mathcal{B},\mathcal{C}\right) $ consists of those
morphisms $\left( f_{2},f_{1},\phi \right) \in \mathsf{X}_{2}\mathsf{Mod}%
\left( \mathcal{B},\mathcal{C}\right) $ with $p\left( f_{2},f_{1},\phi
\right) =\phi .$
\end{proposition}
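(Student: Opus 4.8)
The plan is to recognize this as a restatement of the fibration structure established in Proposition~\ref{g2} together with the universal property of the pullback $2$-crossed module proved in Proposition~\ref{ug2}. First I would recall that, since $p$ is a fibration (Proposition~\ref{g2}) and the chosen cartesian lifting along a monomorphism $\phi:S\to R$ is precisely $(id_{C_2},\phi',\phi):\phi^\ast(\mathcal{C})\to\mathcal{C}$, the functor $\phi^\ast$ is the reindexing functor, which by general fibration theory (and as explained in the paragraph preceding the statement) acts on objects by $\mathcal{C}\mapsto\phi^\ast(\mathcal{C})$ and on morphisms by sending $(\alpha,\beta,id_R)$ to the unique vertical-over-$S$ arrow induced by the cartesian property.

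The core of the argument is to define the two maps of the claimed bijection and check they are mutually inverse. Given a morphism $(f_2,f_1,id_S):\mathcal{B}\to\phi^\ast(\mathcal{C})$ in $\mathsf{X}_2\mathsf{Mod}/S$, I would post-compose with the cartesian arrow to obtain $(id_{C_2},\phi',\phi)\circ(f_2,f_1,id_S):\mathcal{B}\to\mathcal{C}$, whose image under $p$ is $\phi\circ id_S=\phi$, so it lies in $\mathsf{X}_2\mathsf{Mod}/\phi(\mathcal{B},\mathcal{C})$. Conversely, given $(g_2,g_1,\phi):\mathcal{B}\to\mathcal{C}$ with $p(g_2,g_1,\phi)=\phi$, the universal property in Proposition~\ref{ug2} — applied with $id_S$ as the base morphism factoring $p(g_2,g_1,\phi)=\phi\circ id_S$ — yields a unique $(g_2^\ast,g_1^\ast,id_S):\mathcal{B}\to\phi^\ast(\mathcal{C})$ in $\mathsf{X}_2\mathsf{Mod}/S$ with $(id_{C_2},\phi',\phi)\circ(g_2^\ast,g_1^\ast,id_S)=(g_2,g_1,\phi)$; explicitly $g_1^\ast(b_1)=(g_1(b_1),\partial_1'(b_1))$ and $g_2^\ast=g_2$. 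That these two assignments are inverse to each other is exactly the existence-and-uniqueness content of the cartesian property: one direction is the factorization equation just written, and the other follows from the uniqueness clause applied to the identity factorization.

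Finally I would verify naturality in both variables. Naturality in $\mathcal{C}$ uses that $\phi^\ast$ on morphisms is defined via the same cartesian universal property, so the square relating the bijection at $\mathcal{C}$ and at $\mathcal{C}'$ commutes by uniqueness of induced arrows; naturality in $\mathcal{B}$ is immediate from pre-composition being functorial. The main obstacle — really the only place requiring care — is bookkeeping: one must be sure that the universal property of Proposition~\ref{ug2}, which is phrased for morphisms $(f_2,f_1,\phi)$ over $\phi$, correctly translates into the fibration-theoretic statement about factoring through a cartesian arrow over $id_S$, and that the restriction to the fibre $\mathsf{X}_2\mathsf{Mod}/S$ (all base components equal to $id_S$) is respected on both sides. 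Once this dictionary is set up, everything reduces to the already-proven Proposition~\ref{ug2} with no further computation.
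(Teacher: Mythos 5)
Your proposal is correct and follows essentially the same route as the paper: the paper also establishes the bijection by using the cartesian property of $(id_{C_2},\phi',\phi)$ (from Propositions \ref{g2} and \ref{ug2}) to send a morphism over $\phi$ to a unique morphism over $id_S$ into $\phi^\ast(\mathcal{C})$, and conversely post-composes a morphism over $id_S$ with the cartesian arrow. Your version is in fact slightly more complete, since you explicitly check that the two assignments are mutually inverse and address naturality, points the paper leaves implicit.
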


\begin{proof}
Since $(id_{C_{2}},\phi ^{\prime },\phi
):\left\{ C_{2},\phi ^{\ast }(C_{1}),S,\partial _{2}^{\ast
},\partial _{1}^{\ast }\right\}
\rightarrow \left\{ C_{2},C_{1},R,\partial _{2},\partial _{1}\right\} $ is a cartesian
morphism over $\phi $ as may be seen with the proof of proposition \ref{g2}, there is a
unique morphism
\[
\Psi :\left\{ B_{2},B_{1},S,\partial _{2}^{\prime
},\partial _{1}^{\prime }\right\} \rightarrow \left\{ C_{2},\phi ^{\ast
}\left( C_{1}\right) ,S,\partial _{2}^{\ast },\partial _{1}^{\ast }\right\}
\]
with $p\Psi =id_{S}$ for $\left( f_{2},f_{1},\phi \right) :\left\{
B_{2},B_{1},S,\partial _{2}^{\prime },\partial _{1}^{\prime }\right\}
\rightarrow \left\{ C_{2},C_{1},R,\partial _{2},\partial _{1}\right\} $ with
$p(\left( f_{2},f_{1},\phi \right) )=\phi .$

On the other hand; it is clear that there is a morphism
\[
\phi ^{\prime }\circ
\gamma :\left\{ B_{2},B_{1},S,\partial _{2}^{\prime },\partial _{1}^{\prime
}\right\} \rightarrow \left\{ C_{2},C_{1},R,\partial _{2},\partial
_{1}\right\}
\]
for $\gamma :\left\{ B_{2},B_{1},S,\partial _{2}^{\prime
},\partial _{1}^{\prime }\right\} \rightarrow \left\{ C_{2},\phi ^{\ast
}\left( C_{1}\right) ,S,\partial _{2}^{\ast },\partial _{1}^{\ast }\right\} .
$
\end{proof}

For composable monomorphism $\phi :S\rightarrow R$ $\ $and $\phi ^{\prime
}:R\rightarrow T$, there is a natural equivalence
\[
c_{\phi ^{\prime },\phi }:\phi ^{\ast }\phi ^{\prime \ast }\cong \left( \phi
^{\prime }\phi \right) ^{\ast }
\]%
but not equality as shown in the following diagram in which the morphisms $\left(
id_{C_{2}},h,\phi ^{\prime }\phi \right) ,\left( id_{C_{2}},g,\phi \right) $
and $\left( id_{C_{2}},g^{\prime },\phi ^{\prime }\right) $ are cartesian
and so the composition $\left( id_{C_{2}},g,\phi \right) \circ \left(
id_{C_{2}},g^{\prime },\phi ^{\prime }\right) $ is cartesian and $%
\left( id_{C_{2}},k,id_{S}\right) $ is the unique vertical morphism from
proposition \ref{g2} making the diagram commute:
\[
\xymatrix @R=20pt@C=25pt{ C_2 \ar@/^/[drrr]^{id_{C_2}} \ar[dd]  & & & & \\
\ & C_2 \ar@{=}[r]_-{id_{C_2}} \ar[dd] \ar@{-->}[ul]^{id_{C_2}} & C_2 \ar@{=}[r]_-{id_{C_2}} \ar[dd] & C_2 \ar[dd] & \\
\ ({\phi^\prime}\phi)^\ast (C_1) \ar@/^/[drrr]^h \ar[dd]  & & & & \\
\ & \phi^\ast{{\phi^\prime}^\ast(C_1)} \ar[r]_-g \ar[dd] \ar@{-->}[ul]^k & {{\phi^\prime}^\ast(C_1)} \ar[r]_-{g^{\prime}} \ar[dd] & C_1 \ar[dd] & \\
\ S \ar@/^/[drrr]^{{\phi^\prime}\phi}  & & & & \\
\ & S \ar[r]_\phi \ar@{-->}[ul]^{id_S}& R \ar[r]_{\phi^\prime} & T. & }
\]

\begin{definition} Let $ \Phi :\mathbf{X}\to \mathbf{B} $ be a functor. A morphism $ \psi :Z\to
Y $ in $ \mathbf{X} $ over $ v := \Phi (\psi) $ is called
cocartesian if and only if for all $ u: J\to I $ in $ \mathbf{B} $
and $ \theta:Z\to X $ with $ \Phi(\theta)=uv $ there is a unique
morphism $ \varphi: Y\to X $ with $ \Phi(\varphi)=u $ and $
\theta=\varphi\psi . $ This is illustrated by the following diagram:
\[
\xymatrix @R=10pt@C=10pt{ & & & X  \ar[dd] & \\
\ Z \ar[dd] \ar[rr]^\psi \ar@/^/[urrr]^\theta & & Y \ar[dd] \ar@{-->}[ur]^\varphi & & \\
\ & & & \Phi(X) & \\
\ \Phi(Z) \ar[rr]_v \ar@/^/[urrr]^{uv} & & \Phi(Y). \ar[ur]_u& & }
\]
The functor $ \Phi :\mathbf{X}\to \mathbf{B} $ is a cofibration or
category cofibred over $ \mathbf{B} $ if and only if for all $ v:
K\to J $ in $ \mathbf{B} $ and $ Z\in\mathbf{X}$/K there is a
cocartesian morphism $ \psi: Z\to Z^\prime $ over $v$. Such a
$ \psi $ is called a cocartesian lifting of  $ Z $ along $ v. $

The cocartesian liftings of $ Z\in\mathbf{X}$/K along $ v: K\to J $
are also unique up to vertical isomorphism.
\end{definition}

It is interesting to get a characterisation of the cofibration property for a functor that already is a fibration. The following is a useful weakening of the condition for cocartesian in the case of a fibration of categories.
\begin{proposition}\label{ug5}
Let $ \Phi :\mathbf{X}\to \mathbf{B} $ be a fibration of categories. Then $ \psi:Z\to Y $ in $\mathbf{X}$ over $ v:K\to J $ in $\mathbf{B}$ is cocartesian if only if for all $\theta ^{\prime }:Z\rightarrow X^{\prime }$ over $v$ there is a unique
morphism $\psi ^{\prime }:Y\rightarrow X^{\prime }$ in \textbf{X}/J with
$\theta ^{\prime }=\psi ^{\prime }\psi .$
\end{proposition}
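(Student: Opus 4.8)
The plan is to prove the two implications separately, with the forward direction essentially immediate and the reverse direction being where the fibration hypothesis is actually used.

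For the ``only if'' direction, suppose $\psi:Z\to Y$ is cocartesian over $v$. Given $\theta^{\prime}:Z\to X^{\prime}$ over $v$, I would simply apply the defining property of cocartesian morphisms with $u=id_J$: since $\Phi(\theta^{\prime})=v=id_J\circ v$, there is a unique $\varphi:Y\to X^{\prime}$ with $\Phi(\varphi)=id_J$ --- that is, $\varphi\in\mathbf{X}/J$ --- and $\theta^{\prime}=\varphi\psi$. Taking $\psi^{\prime}:=\varphi$ gives exactly the asserted factorization, so nothing more is needed here.

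For the ``if'' direction, assume the weakened factorization property and let $u:J\to I$ in $\mathbf{B}$ together with $\theta:Z\to X$ satisfying $\Phi(\theta)=uv$ be given; I must produce a unique $\varphi:Y\to X$ over $u$ with $\theta=\varphi\psi$. First I would invoke the fibration hypothesis to choose a cartesian lifting $\chi:u^{\ast}(X)\to X$ of $X$ along $u$, so that $u^{\ast}(X)\in\mathbf{X}/J$ and $\Phi(\chi)=u$. Applying the cartesian universal property of $\chi$ to $\theta$ (legitimate since $\Phi(\theta)=uv$) yields a unique $\theta^{\prime}:Z\to u^{\ast}(X)$ over $v$ with $\theta=\chi\theta^{\prime}$. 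Now $\theta^{\prime}$ lies over $v$, so the weakened hypothesis supplies a unique $\psi^{\prime}:Y\to u^{\ast}(X)$ in $\mathbf{X}/J$ with $\theta^{\prime}=\psi^{\prime}\psi$; I would then set $\varphi:=\chi\psi^{\prime}$, which satisfies $\Phi(\varphi)=u\circ id_J=u$ and $\varphi\psi=\chi\psi^{\prime}\psi=\chi\theta^{\prime}=\theta$.

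The uniqueness clause in the reverse direction is the step I expect to require the most care. Suppose $\varphi^{\prime\prime}:Y\to X$ also satisfies $\Phi(\varphi^{\prime\prime})=u$ and $\varphi^{\prime\prime}\psi=\theta$. Since $\Phi(\varphi^{\prime\prime})=u=u\circ id_J$, the cartesian property of $\chi$ gives a unique $\rho:Y\to u^{\ast}(X)$ in $\mathbf{X}/J$ with $\varphi^{\prime\prime}=\chi\rho$. Then $\chi(\rho\psi)=\varphi^{\prime\prime}\psi=\theta=\chi\theta^{\prime}$, with both $\rho\psi$ and $\theta^{\prime}$ lying over $v$, so the uniqueness part of the cartesian property of $\chi$ forces $\rho\psi=\theta^{\prime}$; the uniqueness part of the weakened hypothesis then forces $\rho=\psi^{\prime}$, whence $\varphi^{\prime\prime}=\chi\psi^{\prime}=\varphi$. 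The only genuine subtlety throughout is bookkeeping --- keeping straight which universal property (the cartesian lifting versus the weakened cocartesian condition) is being invoked at each stage and over which arrow of $\mathbf{B}$; the rest is routine diagram chasing.
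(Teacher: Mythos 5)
Your proof is correct, and each appeal to a universal property (the cartesian lifting supplied by the fibration hypothesis versus the weakened factorization condition, and their respective uniqueness clauses) is invoked legitimately and over the right arrow of $\mathbf{B}$. Note that the paper itself states Proposition \ref{ug5} without proof, quoting it from the cited sources of Brown--Sivera and Vistoli; your argument is the standard one for this fact, so it fills in exactly the omitted verification rather than diverging from anything in the text.
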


If we take the fibration $p:\mathsf{X}_{2}\mathsf{Mod}\rightarrow
\mathsf{k}$\textsf{-Alg} and reindexing functor $\phi ^{\ast }$ $: $
\textsf{X}$_{\mathbf{2}}$\textsf{Mod}/$R\rightarrow $\textsf{X}$_{\mathbf{2}}
$\textsf{Mod}/$S$ for monomorphism $\phi :S\rightarrow R$, we get the morphism
\[
\phi_{\left\{ D_{2},D_{1},S,\partial _{2},\partial _{1}\right\} }:\left\{
D_{2},D_{1},S,\partial _{2},\partial _{1}\right\} \rightarrow \phi _{\ast
}\left\{ D_{2},D_{1},S,\partial _{2},\partial _{1}\right\}
\]
which is cocartesian over $\phi $, for all $\left\{ D_{2},D_{1},S,\partial
_{2},\partial _{1}\right\} \in\ $\textsf{X}$_{\mathbf{2}}$\textsf{Mod}/$S.$

Because there is the functor $\phi _{\ast }:$\textsf{X}$_{\mathbf{2}}$%
\textsf{Mod}/$S\rightarrow $\textsf{X}$_{\mathbf{2}}$\textsf{Mod}/$R$ which
is left adjoint to $\phi ^{\ast }$ as mentioned in theorem \ref{ug6}, and also by proposition \ref{ug5}, the adjointness
gives the bijection required for cocartesian property. Thus  $\phi
_{\left\{ D_{2},D_{1},S,\partial _{2},\partial _{1}\right\} }$ is
cocartesian over $\phi .$

So, by constructing the adjoint  $\phi _{\ast }$ of $\phi ^{\ast }$ for $%
\phi ,$ the fibration $p$ is also a cofibration.

\section{Application: Free 2-Crossed Modules}

The definition of a free 2-crossed module is similar in some ways to the corresponding definition of a free crossed module. We recall the definition of a free crossed module and a free 2-crossed module from \cite{[pa]}.

Let $(C,R,\partial )$ be a pre-crossed module, let $Y$ be a set and let  $%
\vartheta :Y\rightarrow C$ \ be a function, then $(C,R,\partial )$
is said to be a free pre-crossed module with basis $\vartheta $ or,
alternatively, on the function $\partial \vartheta :Y\rightarrow R$
if for any pre-crossed module $(C^{\prime },R,\partial ^{\prime })$
and function $\vartheta ^{\prime }:Y\rightarrow C^{\prime }$ such
that $\partial ^{\prime }\vartheta
^{\prime }=\partial \vartheta $, there is a unique morphism%
\[
\phi :(C,R,\partial )\rightarrow (C^{\prime },R,\partial ^{\prime })
\]%
such that $\phi \vartheta =\vartheta ^{\prime }$.

Let $\left\{ C_{2},C_{1},C_{0},\partial _{2},\partial _{1}\right\} $ be a $2$%
-crossed module, let $Y$ be a set and let $\vartheta :Y\rightarrow C_{2}$ be
a function, then $\left\{ C_{2},C_{1},C_{0},\partial _{2},\partial
_{1}\right\} $ is said to be a free $2$-crossed module with basis $\vartheta
$ or, alternatively, on the function $\partial _{2}\vartheta :Y\rightarrow
C_{1}$, if for any $2$-crossed module $\left\{ C_{2}^{\prime
},C_{1},C_{0},\partial _{2}^{\prime },\partial _{1}\right\} $ and function $%
\vartheta ^{\prime }:Y\rightarrow C_{2}^{\prime }$ such that $\partial
_{2}\vartheta =\partial _{2}^{\prime }\vartheta ^{\prime }$, there is a
unique morphism $\Phi :C_{2}\rightarrow C_{2}^{\prime }$ such that $\partial
_{2}^{\prime }\Phi =\partial _{2}$.%
\[
\xymatrix { & & C_{2}^{\prime} \ar@/^/[ddl]^{\partial_{2}\prime} & \\
\ Y \ar[r]^\vartheta  \ar@/^/[urr]^{\vartheta\prime} \ar[dr]_{\partial_{2}\vartheta} & C_{2} \ar[d]^{\partial_{2}} \ar@{-->}[ur]_\Phi & & \\
\  & C_{1} \ar[d]^{\partial_{1}} & & \\
\ & C_{0} & & }
\]
\

The following proposition is an application of induced $2$-crossed modules using the universal properties of free $2$-crossed modules and  of universal morphism of $2$-crossed modules.

\begin{proposition} \label{freeness}
Suppose $\phi :S\rightarrow R$ is a $k$-algebra morphism, then the $2$%
-crossed module $\left\{ C_{2},C_{1},R,\partial _{2},\partial
_{1}\right\} $ is the free $2$-crossed module on $\partial
_{2}\vartheta $ with the morphism $\vartheta :Y\rightarrow C_{2}$ if
and only if the morphism $\left( \vartheta ,\partial _{2}\vartheta
,\phi \right) :\{Y,Y,S,id_{Y},0\}\rightarrow $ $\left\{
C_{2},C_{1},R,\partial _{2},\partial _{1}\right\} $ of $2$-crossed
modules is a universal morphism, i.e. for any $2$-crossed module
morphism
\[
\left( \vartheta ^{\prime },\partial _{2}\vartheta ,\phi \right) :\left\{
Y,Y,S,id_{Y},0\right\} \rightarrow \left\{ C_{2}^{\prime },C_{1},R,\partial
_{2}^{\prime },\partial _{1}\right\}
\]%
there exists a unique $(\Phi, id_{C_{1}}, id_R) $ $2$-crossed module morphism such
that $\Phi \vartheta =\vartheta ^{\prime }$.
\end{proposition}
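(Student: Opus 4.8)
The plan is to recognise both conditions as the same universal property written in two different registers, so that the proof reduces to a faithful translation between them. The pivot is an elementary correspondence: for every $2$-crossed module $\{C_2',C_1,R,\partial_2',\partial_1\}$ over $R$ with the \emph{same} $C_1,\partial_1$, maps $\vartheta':Y\to C_2'$ satisfying $\partial_2'\vartheta'=\partial_2\vartheta$ are in natural bijection with morphisms of $2$-crossed modules $(\vartheta',\partial_2\vartheta,\phi):\{Y,Y,S,id_Y,0\}\to\{C_2',C_1,R,\partial_2',\partial_1\}$, i.e. with those morphisms whose middle component is forced to be $\partial_2\vartheta$ and whose base component is forced to be $\phi$. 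Indeed, writing out the morphism axioms for a triple $(f_2,f_1,f_0)=(\vartheta',\partial_2\vartheta,\phi)$: the square $f_0\partial_1=\partial_1'f_1$ reads $\phi\circ 0=\partial_1\circ\partial_2\vartheta$, which holds because $\partial_1\partial_2=0$; the square $f_1\partial_2=\partial_2'f_2$ reads $\partial_2\vartheta\circ id_Y=\partial_2'\circ\vartheta'$, i.e. exactly the compatibility condition $\partial_2\vartheta=\partial_2'\vartheta'$ from the definition of a free $2$-crossed module; and the $S$-equivariance of $f_2,f_1$ together with the compatibility $f_2\{-,-\}=\{-,-\}(f_1\times f_1)$ of Peiffer liftings hold automatically from the (forced, essentially trivial) action of $S$ and Peiffer lifting carried by $\{Y,Y,S,id_Y,0\}$, the last clause being supplied by \textbf{PL2} in $C_2'$ which identifies $\{\partial_2'\vartheta'(y),\partial_2'\vartheta'(y')\}'$ with the image under $\vartheta'$ of $\{y,y'\}$. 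So $S$ and $\phi$ are carried along purely formally and the real datum on both sides is $\vartheta'$. I would isolate and verify this correspondence first; it is the technical core.

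Granting the correspondence, the two implications are immediate. For the forward direction, assume $\{C_2,C_1,R,\partial_2,\partial_1\}$ is free on $\partial_2\vartheta$ with basis $\vartheta$, and let $(\vartheta',\partial_2\vartheta,\phi):\{Y,Y,S,id_Y,0\}\to\{C_2',C_1,R,\partial_2',\partial_1\}$ be any $2$-crossed module morphism; the correspondence extracts $\vartheta':Y\to C_2'$ with $\partial_2'\vartheta'=\partial_2\vartheta$, and freeness then produces a unique morphism $(\Phi,id_{C_1},id_R)$ with $\Phi\vartheta=\vartheta'$ (and then $\partial_2'\Phi=\partial_2$ automatically, since $f_1=id_{C_1}$ forces $\partial_2'\Phi=id_{C_1}\partial_2=\partial_2$), which is exactly the universality of $(\vartheta,\partial_2\vartheta,\phi)$. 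For the converse, assume $(\vartheta,\partial_2\vartheta,\phi)$ is universal, and let $\{C_2',C_1,R,\partial_2',\partial_1\}$ and $\vartheta':Y\to C_2'$ with $\partial_2\vartheta=\partial_2'\vartheta'$ be given; the correspondence promotes $\vartheta'$ to a morphism $(\vartheta',\partial_2\vartheta,\phi)$ out of $\{Y,Y,S,id_Y,0\}$, and universality delivers the unique $(\Phi,id_{C_1},id_R)$ with $\Phi\vartheta=\vartheta'$, which is precisely the defining property of the free $2$-crossed module on $\partial_2\vartheta$. Throughout one uses that both definitions quantify only over targets sharing $C_1,R,\partial_1$, so every comparison morphism has base components forced to $id_{C_1},id_R$; this is exactly what makes the two properties literally the same after translation.

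I expect the only real obstacle to be the correspondence of the first paragraph, specifically the verification that \emph{every} $2$-crossed module morphism axiom for a triple out of $\{Y,Y,S,id_Y,0\}$ collapses to the single equation $\partial_2'\vartheta'=\partial_2\vartheta$. This needs the structure (action of $S$, Peiffer lifting) on $\{Y,Y,S,id_Y,0\}$ pinned down so that the equivariance clauses become vacuous and the Peiffer-compatibility clause is furnished by \textbf{PL2} in the target; everything after that is formal diagram chasing. Once this is in place the proposition follows, and together with the induced-$2$-crossed-module constructions of Propositions \ref{ug3} and \ref{ug4} it yields the promised description of free $2$-crossed modules as the values of the induced functor on the generating object $\{Y,Y,S,id_Y,0\}$.
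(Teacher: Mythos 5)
Your proposal is correct and follows essentially the same route as the paper: both directions are obtained by translating between the two universal properties, using that a $2$-crossed module morphism out of $\{Y,Y,S,id_{Y},0\}$ with middle component $\partial_{2}\vartheta$ and base $\phi$ is the same data as a map $\vartheta'$ with $\partial_{2}'\vartheta'=\partial_{2}\vartheta$. In fact the paper leaves exactly this correspondence implicit (its proof simply invokes one universal property to deliver the other), so your explicit check that the morphism axioms collapse to $\partial_{2}'\vartheta'=\partial_{2}\vartheta$ — with the Peiffer clause supplied by \textbf{PL2} — supplies detail the published argument omits.
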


\begin{proof}
Suppose $\left( \vartheta ,\partial _{2}\vartheta ,\phi \right) $ is a
universal morphism of $2$-crossed modules. Let
\[
\left( \vartheta ^{\prime },\partial _{2}\vartheta ,\phi \right) :\left\{
Y,Y,S,id_{Y},0\right\} \rightarrow \left\{ C_{2}^{\prime },C_{1},R,\partial
_{2}^{\prime },\partial _{1}\right\}
\]%
be a $2$-crossed module morphism. Then there exists a unique
\[
\left( \Phi ,id_{C_{1}},id_{R}\right) :\left\{ C_{2},C_{1},R,\partial
_{2},\partial _{1}\right\} \rightarrow \left\{ C_{2}^{\prime
},C_{1},R,\partial _{2}^{\prime },\partial _{1}\right\}
\]%
$2$-crossed module morphism such that $\Phi \vartheta =\vartheta ^{\prime }$. This description gives the
required free $2$-crossed module $\left\{ C_{2},C_{1},R,\partial
_{2},\partial _{1}\right\} $ on $\partial _{2}\vartheta $.

On the other hand, let $\left\{ C_{2},C_{1},R,\partial _{2},\partial
_{1}\right\} $ be a free $2$-crossed module on $\partial _{2}\vartheta $, $\left\{ C_{2}^{\prime },C_{1},R,\partial _{2}^{\prime },\partial
_{1}\right\} $ be a $2$-crossed module and $\vartheta ^{\prime
}:Y\rightarrow C_{2}^{\prime }$ be an algebra morphism such that $\partial _{2}\vartheta ={\partial _{2}}^\prime {\vartheta}^\prime $. Then by using the
universal property of a free $2$-crossed module, we get a unique
morphism $\Phi:C_2 \rightarrow {C_2}^\prime$ such that $\partial _{2}^{\prime }\Phi=\partial _{2}$. This proves that
\[
\left( \vartheta ,\partial _{2}\vartheta ,\phi \right)
:\{Y,Y,S,id_{Y},0\}\rightarrow \left\{ C_{2},C_{1},R,\partial _{2},\partial
_{1}\right\}
\]%
is a universal morphism of $2$-crossed modules.
\[
\xymatrix { & & & C_{2}^{\prime} \ar@/^/[ddl]^{{\partial_2}^\prime} &  \\
\ Y \ar[rr]^\vartheta \ar@{=}[d]_{id_Y} \ar@/^/[urrr]^{\vartheta^\prime} & & C_{2} \ar[d]^{\partial_2} \ar@{-->}[ur]^\Phi & & \\
\ Y  \ar[rr]^{{\partial_2}\vartheta} \ar[d]_0  & & C_{1}
\ar[d]^{\partial_1}  & &
\\
\  S \ar[rr]^\phi & & R. & &}
\]
\end{proof}

This proposition leads to link free $2$-crossed modules with induced $2$-crossed modules.

Given any $k$-algebra morphism $\phi :S\rightarrow R,$ we note that $Y%
\overset{id_{Y}}{\rightarrow }Y\overset{0}{\rightarrow }S$ is a $2$-crossed
module and form the induced $2$-crossed module $\left\{ \phi _{\ast
}(Y),\phi _{\ast }(Y),R,id_{Y\ast },0_{\ast }\right\} $ as described in  section \ref{induced}.

Proposition \ref{freeness} implies that the free $2$-crossed module on the morphism $%
Y\rightarrow \phi _{\ast }(Y)$ is the induced $2$-crossed module on $\phi $.

Note that the definition of free $2$-crossed modules has been chosen to tie
in with the definition of freeness given in other more general context.

\begin{proposition}
Let $\left\{ C_{2},C_{1},C_{0},\partial _{2},\partial _{1}\right\} $ be a
free $2$-crossed module on $f$ and $f=0$, then $C_{2}$ is a free $C_{1}$%
-module on $f$.
\end{proposition}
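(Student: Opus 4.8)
The plan is to unpack the definition of a free $2$-crossed module on $f$ when $f = 0$, and show that the universal property degenerates precisely into the universal property of a free module. First I would record what $f = 0$ forces: if $\left\{ C_{2},C_{1},C_{0},\partial _{2},\partial _{1}\right\}$ is free on $f \colon Y \to C_{1}$ with basis $\vartheta \colon Y \to C_{2}$, then $\partial_{2}\vartheta = f = 0$, so the image of $\vartheta$ lies in $\ker \partial_{2}$. By remark (iii) in Section 1 on $2$-crossed modules with a Peiffer lifting, together with $\mathbf{PL2}$, elements of $\operatorname{Im}\partial_{2}$ multiply via $\{\partial_{2}(-),\partial_{2}(-)\}$; but the relevant simplification here is that $C_{1}$ acts on $C_{2}$ through $c_{1}\cdot c_{2} = \{c_{1},\partial_{2}(c_{2})\}$, making $(C_{2},C_{1},\partial_{2})$ a crossed module, and the sub-structure generated by $\vartheta(Y) \subseteq \ker\partial_{2}$ is an honest $C_{1}$-module with zero multiplication (by $\mathbf{PL2}$ applied to elements of $\ker\partial_2$, products vanish, so $C_2$ restricted appropriately has zero internal multiplication).

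Next I would translate the freeness condition. Given any $C_{1}$-module $N$ (viewed as a $2$-crossed module $N \to C_{1} \overset{\partial_1}{\to} C_{0}$ with zero maps and zero multiplication on $N$, which is legitimate by example (ii) of Section 1) and any set-map $\vartheta' \colon Y \to N$, the compatibility $\partial_{2}\vartheta = \partial_{2}'\vartheta' = 0$ holds automatically since both targets use the zero boundary. Hence the defining universal property of the free $2$-crossed module yields a unique $2$-crossed module morphism $\Phi \colon C_{2} \to N$ with $\Phi\vartheta = \vartheta'$. Because $\Phi$ is a morphism of $2$-crossed modules over $\mathrm{id}_{C_1}$, it is in particular $C_{1}$-equivariant and additive, i.e. a $C_{1}$-module homomorphism. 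Conversely any $C_{1}$-module map extends to such a $2$-crossed module morphism, so the two universal properties coincide. This gives exactly the statement that $C_{2}$ is the free $C_{1}$-module on the function $f = \partial_{2}\vartheta$ — or more precisely on $\vartheta \colon Y \to C_2$, since $f$ being zero means the "basis data" is carried by $\vartheta$.

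The step I expect to be the main obstacle is verifying that an arbitrary $C_{1}$-module $N$ with zero multiplication genuinely assembles into a $2$-crossed module $\{N, C_{1}, C_{0}, 0, \partial_1\}$ for which morphisms out of $\{C_2,C_1,C_0,\partial_2,\partial_1\}$ are forced to be $C_1$-module maps, and dually that the free-module universal property really does range over enough test objects. One must check that the Peiffer lifting on such an $N$ can be taken trivial consistently with $\mathbf{PL1}$–$\mathbf{PL5}$ (the remarks after $\mathbf{PL4}$ in Section 1 essentially do this: with trivial lifting, $\mathbf{PL3}$ and $\mathbf{PL5}$ vanish, $\mathbf{PL2}$ forces zero products in $N$, $\mathbf{PL4}$ forces $\partial_1(c_1)\cdot n = 0$), and that no extra constraint on $\vartheta'$ sneaks in. Once that bookkeeping is done, the universal property of the free $2$-crossed module restricted to these test objects is verbatim the universal property defining the free $C_{1}$-module, and the proof concludes.
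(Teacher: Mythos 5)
Your plan to apply the freeness universal property of the $2$-crossed module directly to test objects of the form $\left\{ N,C_{1},C_{0},0,\partial _{1}\right\} $ has a genuine gap at axiom $\mathbf{PL1}$, which you do not address (you only discuss the effect of a trivial lifting on $\mathbf{PL2}$--$\mathbf{PL5}$). With top boundary $\partial _{2}^{\prime }=0$, $\mathbf{PL1}$ reads $0=\partial _{2}^{\prime }\{m_{0},m_{1}\}=m_{0}m_{1}-\partial _{1}(m_{1})\cdot m_{0}$ for all $m_{0},m_{1}\in C_{1}$, i.e. it forces the Peiffer identity to hold in $(C_{1},C_{0},\partial _{1})$, and this is independent of which Peiffer lifting you choose, since any lifting is killed by the zero boundary. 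But in a $2$-crossed module $(C_{1},C_{0},\partial _{1})$ is only a pre-crossed module (the Peiffer lifting exists precisely to measure the failure of the Peiffer identity), so your class of test objects need not exist at all. A second defect: even when such a test object does exist, the $C_{1}$-action on $N$ inside that $2$-crossed module is the one determined by $m\cdot n=\{m,\partial _{2}^{\prime }(n)\}=\{m,0\}=0$, i.e. trivial; hence the morphism $\Phi $ supplied by the universal property is not shown to be equivariant for the given, arbitrary $C_{1}$-module structure on $N$, which is exactly what freeness of $C_{2}$ as a $C_{1}$-module requires.

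The paper sidesteps both difficulties by descending one level first: it uses that $(C_{2},C_{1},\partial _{2})$ is a free crossed module on $f$ and then tests against the crossed modules $(C_{2}^{\prime },C_{1},0)$ obtained from an arbitrary $C_{1}$-module $C_{2}^{\prime }$ with zero multiplication, as in example (ii) of Section 1. In the crossed module setting the $C_{1}$-action on the test object is part of the given structure (no $\mathbf{PL1}$-type obstruction arises and the action is the intended module action), and since $\partial _{2}\vartheta =f=0=0\vartheta ^{\prime }$, freeness gives the unique $C_{1}$-equivariant $\Phi $ with $\Phi \vartheta =\vartheta ^{\prime }$. To repair your argument you would either have to prove the intermediate claim that the top part of a free $2$-crossed module is a free crossed module (which the paper asserts and uses), or find a different family of $2$-crossed module test objects that both exists over the given pre-crossed base $(C_{1},C_{0},\partial _{1})$ and carries an arbitrary prescribed $C_{1}$-action; as written, neither is available.
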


\begin{proof}
Given any free $2$-crossed module\ $\left\{ C_{2},C_{1},C_{0},\partial _{2},\partial _{1}\right\} $ on $f$, then $\left( C_{2},C_{1},\partial _{2}\right) $
is a free crossed module on $f$. Since $\left( C_{2},C_{1},\partial
_{2}\right) $ is free on $f$, then there exists a function $\vartheta
:Y\rightarrow C_{2}$ such that $\partial _{2}\vartheta =f=0$. Let $%
C_{2}^{\prime }$ be a $C_{1}$-module and $\vartheta ^{\prime }:Y\rightarrow
C_{2}^{\prime }$ be a function into $C_{2}^{\prime }$. Form the crossed
module $\left( C_{2}^{\prime },C_{1},0\right) $. Since $\partial
_{2}\vartheta =f=0=0\vartheta ^{\prime }$, $\ $thus there is a unique
morphism $\Phi :C_{2}\rightarrow C_{2}^{\prime }$ such that $\Phi \vartheta
=\vartheta ^{\prime }$. Therefore $C_{2}$ is free $C_{1}$-module on $f$.
\end{proof}

\newpage

\section {Appendix}

\textbf{The proof of proposition \ref{ug2}}

\textbf{PL3:}%
\begin{equation*}
\begin{array}{cll}
& \left\{ \left( c_{1},s\right) ,\left( c_{1}^{\prime },s^{\prime }\right)
\left( c_{1}^{\prime \prime },s^{\prime \prime }\right) \right\} &  \\
= & \left\{ \left( c_{1},s\right) ,\left( c_{1}^{\prime }c_{1}^{\prime
\prime },s^{\prime }s^{\prime \prime }\right) \right\} &  \\
= & \left\{ c_{1},c_{1}^{\prime }c_{1}^{\prime \prime }\right\} &  \\
= & \left\{ c_{1}c_{1}^{\prime },c_{1}^{\prime \prime }\right\} +\partial
_{1}\left( c_{1}^{\prime \prime }\right) \cdot \left\{ c_{1},c_{1}^{\prime
}\right\} &  \\
= & \left\{ c_{1}c_{1}^{\prime },c_{1}^{\prime \prime }\right\} +\phi \left(
s^{\prime \prime }\right) \cdot \left\{ c_{1},c_{1}^{\prime }\right\} &  \\
= & \left\{ c_{1}c_{1}^{\prime },c_{1}^{\prime \prime }\right\} +s^{\prime
\prime }\cdot \left\{ c_{1},c_{1}^{\prime }\right\} &  \\
= & \left\{ c_{1}c_{1}^{\prime },c_{1}^{\prime \prime }\right\} +\partial
_{1}^{\ast }\left( c_{1}^{\prime \prime },s^{\prime \prime }\right) \cdot
\left\{ c_{1},c_{1}^{\prime }\right\} &  \\
= & \left\{ \left( c_{1}c_{1}^{\prime },ss^{\prime }\right) ,\left(
c_{1}^{\prime \prime },s^{\prime \prime }\right) \right\} +\partial
_{1}^{\ast }\left( c_{1}^{\prime \prime },s^{\prime \prime }\right) \cdot
\left\{ \left( c_{1},s\right) ,\left( c_{1}^{\prime },s^{\prime }\right)
\right\} &  \\
= & \left\{ \left( c_{1},s\right) \left( c_{1}^{\prime },s^{\prime }\right)
,\left( c_{1}^{\prime \prime },s^{\prime \prime }\right) \right\} +\partial
_{1}^{\ast }\left( c_{1}^{\prime \prime },s^{\prime \prime }\right) \cdot
\left\{ \left( c_{1},s\right) ,\left( c_{1}^{\prime },s^{\prime }\right)
\right\} . &
\end{array}%
\end{equation*}

\textbf{PL4:}

\begin{equation*}
\begin{array}{cl}
& \left\{ \left( c_{1},s\right) ,\partial _{2}^{\ast }(c_{2})\right\}
+\left\{ \partial _{2}^{\ast }(c_{2}),\left( c_{1},s\right) \right\} \\
= & \left\{ \left( c_{1},s\right) ,(\partial _{2}\left( c_{2}\right)
,0)\right\} +\left\{ (\partial _{2}\left( c_{2}\right) ,0),\left(
c_{1},s\right) \right\} \\
= & \left\{ c_{1},\partial _{2}\left( c_{2}\right) \right\} +\left\{
\partial _{2}\left( c_{2}\right) ,c_{1}\right\} \\
= & \partial _{1}(c_{1})\cdot c_{2} \\
= & \phi \left( s\right) \cdot c_{2} \\
= & s\cdot c_{2} \\
= & \partial _{1}^{\ast }\left( c_{1},s\right) \cdot c_{2}.%
\end{array}%
\end{equation*}

\textbf{PL5:}%
\begin{equation*}
\begin{array}{rcl}
\left\{ \left( c_{1},s\right) \cdot s^{\prime \prime },\left( c_{1}^{\prime
},s^{\prime }\right) \right\} & = & \left\{ \left( c_{1}\cdot \phi
(s^{\prime \prime }),ss^{\prime \prime }\right) ,\left( c_{1}^{\prime
},s^{\prime }\right) \right\} \\
& = & \left\{ c_{1}\cdot \phi (s^{\prime \prime }),c_{1}^{\prime }\right\}
\\
& = & \left\{ c_{1},c_{1}^{\prime }\right\} \cdot \phi (s^{\prime \prime })
\\
& = & \left\{ c_{1},c_{1}^{\prime }\right\} \cdot s^{\prime \prime } \\
& = & \left\{ \left( c_{1},s\right) ,\left( c_{1}^{\prime },s^{\prime
}\right) \right\} \cdot s^{\prime \prime }.%
\end{array}%
\end{equation*}%
\begin{equation*}
\begin{array}{rcl}
\left\{ \left( c_{1},s\right) ,\left( c_{1}^{\prime },s^{\prime }\right)
\cdot s^{\prime \prime }\right\} & = & \left\{ \left( c_{1},s\right) \left(
c_{1}^{\prime }\cdot \phi (s^{\prime \prime }),s^{\prime }s^{\prime \prime
}\right) \right\} \\
& = & \left\{ c_{1},c_{1}^{\prime }\cdot \phi (s^{\prime \prime })\right\}
\\
& = & \left\{ c_{1},c_{1}^{\prime }\right\} \cdot \phi (s^{\prime \prime })
\\
& = & \left\{ c_{1},c_{1}^{\prime }\right\} \cdot s^{\prime \prime } \\
& = & \left\{ \left( c_{1},s\right) ,\left( c_{1}^{\prime },s^{\prime
}\right) \right\} \cdot s^{\prime \prime }%
\end{array}%
\end{equation*}
for all $(c_{1},s),(c_{1}^{\prime },s^{\prime }),(c_{1}^{\prime \prime },s^{\prime
\prime })\in \phi ^{\ast }(C_{1}),c_{2}\in C_{2}$ and $s^{\prime \prime }\in
S.$

Let us check that $\left( id_{C_{2}},\phi ^{\prime },\phi \right) :\left\{
C_{2},\phi ^{\ast }(C_{1}),S,\partial _{2}^{\ast },\partial _{1}^{\ast
}\right\} \rightarrow \left\{ C_{2},C_{1},R,\partial _{2},\partial
_{1}\right\} $ is a morphism of $2$-crossed modules where $\phi ^{\prime
}\left( c_{1},s\right) =c_{1}$.%
\begin{equation*}
\begin{array}{rcl}
id_{C_{2}}\left( s\cdot c_{2}\right) & = & s\cdot c_{2} \\
& = & \phi \left( s\right) \cdot c_{2} \\
& = & \phi \left( s\right) \cdot id_{C_{2}}(c_{2})%
\end{array}%
\end{equation*}%
and similarly $\phi ^{\prime }(s\cdot \left( c_{1},s^{\prime }\right) )=\phi
\left( s\right) \cdot \phi ^{\prime }(c_{1},s^{\prime })$
\begin{equation*}
\begin{array}{rcl}
\left( \phi ^{\prime }\partial _{2}^{\ast }\right) \left( c_{2}\right) & = &
\phi ^{\prime }\left( \partial _{2}\left( c_{2}\right) ,0\right) \\
& = & \partial _{2}\left( c_{2}\right) \\
& = & \partial _{2}id_{C_{2}}\left( c_{2}\right)%
\end{array}%
\end{equation*}%
and $\phi \partial _{1}^{\ast }=\partial _{1}\phi ^{\prime }$ for all $(c_{1},s^{\prime })\in \phi ^{\ast }(C_{1}),c_{2}\in C_{2}$ and $s\in S.$
\[
\begin{array}{rcl}
id_{C_{2}}\left\{ -,-\right\} \left( \left( c_{1},s\right) ,\left(
c_{1}^{\prime },s^{\prime }\right) \right)  & = & id_{C_{2}}\left( \left\{
\left( c_{1},s\right) ,\left( c_{1}^{\prime },s^{\prime }\right) \right\}
\right)  \\
& = & \left\{ c_{1},c_{1}^{\prime }\right\}  \\
& = & \left\{ -,-\right\} \left( c_{1},c_{1}^{\prime }\right)  \\
& = & \left\{ -,-\right\} \left( \phi ^{\prime }\left( c_{1},s\right) ,\phi
^{\prime }\left( c_{1}^{\prime },s^{\prime }\right) \right)  \\
& = & \{-,-\}\left( \phi ^{\prime }\times \phi ^{\prime }\right) \left(
\left( c_{1},s\right) ,\left( c_{1}^{\prime },s^{\prime }\right) \right)
\end{array}%
\]
for all $\left( c_{1},s\right) ,\left( c_{1}^{\prime },s^{\prime }\right)
\in \phi ^{\ast }(C_{1}).$

\textbf{The proof of proposition \ref{ug3}}

\textbf{PL3:}%
\begin{equation*}
\begin{array}{cl}
& \left\{ \left( d_{1},r\right) ,\left( d_{1}^{\prime },r^{\prime }\right)
\left( d_{1}^{\prime \prime },r^{\prime \prime }\right) \right\} \\
= & \left\{ \left( d_{1},r\right) ,\left( d_{1}^{\prime }d_{1}^{\prime
\prime },r^{\prime }r^{\prime \prime }\right) \right\} \\
= & \left( \left\{ d_{1},d_{1}^{\prime }d_{1}^{\prime \prime }\right\}
,r(r^{\prime }r^{\prime \prime })\right) \\
= & \left( \left\{ d_{1}d_{1}^{\prime },d_{1}^{\prime \prime }\right\}
+\partial _{1}\left( d_{1}^{\prime \prime }\right) \cdot \left\{
d_{1},d_{1}^{\prime }\right\} ,r(r^{\prime }r^{\prime \prime })\right) \\
= & \left( \left\{ d_{1}d_{1}^{\prime },d_{1}^{\prime \prime }\right\}
,r(r^{\prime }r^{\prime \prime }))+(\partial _{1}\left( d_{1}^{\prime \prime
}\right) \cdot \left\{ d_{1},d_{1}^{\prime }\right\} ,r(r^{\prime }r^{\prime
\prime })\right) \\
= & \left( \left\{ d_{1}d_{1}^{\prime },d_{1}^{\prime \prime }\right\}
,rr^{\prime }r^{\prime \prime })+(\left\{ d_{1},d_{1}^{\prime }\right\}
,\phi \left( \partial _{1}\left( d_{1}^{\prime \prime }\right) \right)
rr^{\prime }r^{\prime \prime }\right) \\
= & \left( \left\{ d_{1}d_{1}^{\prime },d_{1}^{\prime \prime }\right\}
,rr^{\prime }r^{\prime \prime })+\left( \phi \left( \partial _{1}\left(
d_{1}^{\prime \prime }\right) \right) r^{\prime \prime }\right) \cdot
(\left\{ d_{1},d_{1}^{\prime }\right\} ,rr^{\prime }\right) \\
= & \left( \left\{ d_{1}d_{1}^{\prime },d_{1}^{\prime \prime }\right\}
,rr^{\prime }r^{\prime \prime })+\partial _{1_\ast }(d_{1}^{\prime \prime
},r^{\prime \prime })\cdot (\left\{ d_{1},d_{1}^{\prime }\right\}
,rr^{\prime }\right) \\
= & \left\{ (d_{1}d_{1}^{\prime },rr^{\prime }),(d_{1}^{\prime \prime
},r^{\prime \prime })\right\} +\partial _{1_\ast }(d_{1}^{\prime \prime
},r^{\prime \prime })\cdot \left\{ (d_{1},r),(d_{1}^{\prime },r^{\prime
})\right\} \\
= & \left\{ (d_{1},r)(d_{1}^{\prime },r^{\prime }),(d_{1}^{\prime \prime
},r^{\prime \prime })\right\} +\partial _{1_\ast }(d_{1}^{\prime \prime
},r^{\prime \prime })\cdot \left\{ (d_{1},r),(d_{1}^{\prime },r^{\prime
})\right\} .%
\end{array}%
\end{equation*}

\textbf{PL4:}%
\begin{equation*}
\begin{array}{cl}
& \left\{ \left( d_{1},r\right) ,\partial _{2_{\ast }}(d_{2},r^{\prime
})\right\} +\left\{ \partial _{2_{\ast }}(d_{2},r^{\prime }),\left(
d_{1},r\right) \right\} \\
= & \left\{ \left( d_{1},r\right) ,(\partial _{2}\left( d_{2}\right)
,r^{\prime })\right\} +\left\{ (\partial _{2}\left( d_{2}\right) ,r^{\prime
}),\left( d_{1},r\right) \right\} \\
= & \left( \left\{ d_{1},\partial _{2}\left( d_{2}\right) \right\}
,rr^{\prime }\right) +\left( \left\{ \partial _{2}\left( d_{2}\right)
,d_{1}\right\} ,r^{\prime }r\right) \\
= & \left( \left\{ d_{1},\partial _{2}\left( d_{2}\right) \right\} +\left\{
\partial _{2}\left( d_{2}\right) ,d_{1}\right\} ,rr^{\prime }\right) \\
= & \left( \partial _{1}(d_{1})\cdot d_{2},rr^{\prime }\right) \\
= & \left( d_{2},\phi \left( \partial _{1}(d_{1})\right) rr^{\prime }\right)
\\
= & \left( \phi \left( \partial _{1}(d_{1})\right) r\right) \cdot
(d_{2},r^{\prime }) \\
= & \partial _{1_\ast }(d_{1},r)\cdot (d_{2},r^{\prime }).%
\end{array}%
\end{equation*}

\textbf{PL5:}%
\begin{equation*}
\begin{array}{ccl}
\left\{ \left( d_{1},r\right) \cdot r^{\prime \prime },\left( d_{1}^{\prime
},r^{\prime }\right) \right\} & = & \left\{ \left( d_{1},rr^{\prime \prime
}\right) ,\left( d_{1}^{\prime },r^{\prime }\right) \right\} \\
& = & \left( \left\{ d_{1},d_{1}^{\prime }\right\} ,rr^{\prime \prime
}r^{\prime }\right) \\
& = & \left( \left\{ d_{1},d_{1}^{\prime }\right\} ,rr^{\prime }\right)
\cdot r^{\prime \prime } \\
& = & \left\{ \left( d_{1},r\right) ,\left( d_{1}^{\prime },r^{\prime
}\right) \right\} \cdot r^{\prime \prime }.%
\end{array}%
\end{equation*}%
\begin{equation*}
\begin{array}{ccl}
\left\{ \left( d_{1},r\right) ,\left( d_{1}^{\prime },r^{\prime }\right)
\cdot r^{\prime \prime }\right\} & = & \left\{ (d_{1},r),(d_{1}^{\prime
},r^{\prime }r^{\prime \prime })\right\} \\
& = & \left( \left\{ d_{1},d_{1}^{\prime }\right\} ,rr^{\prime }r^{\prime
\prime }\right) \\
& = & (\left\{ d_{1},d_{1}^{\prime }\right\} ,rr^{\prime })\cdot r^{\prime
\prime } \\
& = & \left\{ \left( d_{1},r\right) ,\left( d_{1}^{\prime },r^{\prime
}\right) \right\} \cdot r^{\prime \prime }%
\end{array}%
\end{equation*}
for all $(d_{1},r),(d_{1}^{\prime },r^{\prime }),(d_{1}^{\prime \prime
},r^{\prime \prime })\in \phi _{\ast }(D_{1}),(d_{2},r^{\prime })\in \phi
_{\ast }(D_{2})$ and $r^{\prime \prime }\in R.$

\newpage
\textbf{The proof of proposition \ref{ug4}}

\textbf{PL3:}%
\begin{equation*}
\begin{array}{cl}
& \left\{ d_{1}+KD_{1},\left( d_{1}^{\prime }+KD_{1}\right) \left(
d_{1}^{\prime \prime }+KD_{1}\right) \right\} \\
= & \left\{ d_{1}+KD_{1},\left( d_{1}^{\prime }d_{1}^{\prime \prime
}+KD_{1}\right) \right\} \\
= & \left\{ d_{1},d_{1}^{\prime }d_{1}^{\prime \prime }\right\} +KD_{2} \\
= & \left( \left\{ d_{1}d_{1}^{\prime },d_{1}^{\prime \prime }\right\}
+\partial _{1}(d_{1}^{\prime \prime })\cdot \left\{ d_{1},d_{1}^{\prime
}\right\} \right) +KD_{2} \\
= & \left( \left\{ d_{1}d_{1}^{\prime },d_{1}^{\prime \prime }\right\}
+KD_{2}\right) +\left( \partial _{1}(d_{1}^{\prime \prime })\cdot \left\{
d_{1},d_{1}^{\prime }\right\} +KD_{2}\right) \\
= & \left( \left\{ d_{1}d_{1}^{\prime },d_{1}^{\prime \prime }\right\}
+KD_{2}\right) +\left( \partial _{1}(d_{1}^{\prime \prime })+K)\cdot \left\{
d_{1},d_{1}^{\prime }\right\} +KD_{2}\right) \\
= & \left( \left\{ d_{1}d_{1}^{\prime },d_{1}^{\prime \prime }\right\}
+KD_{2}\right) +\partial _{1_{\ast }}(d_{1}^{\prime \prime }+KD_{1})\cdot
\left( \left\{ d_{1},d_{1}^{\prime }\right\} +KD_{2}\right) \\
= & \left\{ d_{1}d_{1}^{\prime }+KD_{1},d_{1}^{\prime \prime
}+KD_{1}\right\} +\partial _{1_{\ast }}(d_{1}^{\prime \prime }+KD_{1})\cdot
\left\{ d_{1}+KD_{1},d_{1}^{\prime }+KD_{1}\right\} \\
= & \left\{ \left( d_{1}+KD_{1}\right) \left( d_{1}^{\prime }+KD_{1}\right)
,d_{1}^{\prime \prime }+KD_{1}\right\} +\partial _{1_{\ast }}(d_{1}^{\prime
\prime }+KD_{1})\cdot \left\{ d_{1}+KD_{1},d_{1}^{\prime }+KD_{1}\right\} .%
\end{array}%
\end{equation*}

\textbf{PL4:}%
\begin{equation*}
\begin{array}{cl}
& \left\{ d_{1}+KD_{1},\partial _{2_{\ast }}\left( d_{2}+KD_{2}\right)
\right\} +\left\{ \partial _{2_{\ast }}\left( d_{2}+KD_{2}\right)
,d_{1}+KD_{1}\right\} \\
= & \left\{ d_{1}+KD_{1},\partial _{2}\left( d_{2}\right) +KD_{1}\right\}
+\left\{ \partial _{2}\left( d_{2}\right) +KD_{1},d_{1}+KD_{1}\right\} \\
= & \left( \left\{ d_{1},\partial _{2}\left( d_{2}\right) \right\}
+KD_{2}\right) +\left( \left\{ \partial _{2}\left( d_{2}\right)
,d_{1}\right\} +KD_{2}\right) \\
= & \left( \left\{ d_{1},\partial _{2}\left( d_{2}\right) \right\} +\left\{
\partial _{2}\left( d_{2}\right) ,d_{1}\right\} \right) +KD_{2} \\
= & \left( \partial _{1}(d_{1})\cdot d_{2}\right) +KD_{2} \\
= & \left( \partial _{1}(d_{1})+K\right) \cdot \left( d_{2}+KD_{2}\right) \\
= & \partial _{1_\ast }(d_{1}+KD_{1})\cdot \left( d_{2}+KD_{2}\right) .%
\end{array}%
\end{equation*}

\textbf{PL5:}%
\begin{equation*}
\begin{array}{ccl}
\left\{ d_{1}+KD_{1},d_{1}^{\prime }+KD_{1}\right\} \cdot r & = & \left\{
d_{1}+KD_{1},d_{1}^{\prime }+KD_{1}\right\} \cdot \left( s+K\right) \\
& = & \left( \left\{ d_{1},d_{1}^{\prime }\right\} +KD_{2}\right) \cdot
\left( s+K\right) \\
& = & \left( \left\{ d_{1},d_{1}^{\prime }\right\} \cdot s\right) +KD_{2} \\
& = & \left( \left\{ d_{1}\cdot s,d_{1}^{\prime }\right\} \right) +KD_{2} \\
& = & \left\{ \left( d_{1}\cdot s\right) +KD_{1},d_{1}^{\prime
}+KD_{1}\right\} \\
& = & \left\{ \left( d_{1}+KD_{1}\right) \cdot \left( s+K\right)
,d_{1}^{\prime }+KD_{1}\right\} \\
& = & \left\{ \left( d_{1}+KD_{1}\right) \cdot r,d_{1}^{\prime
}+KD_{1}\right\} .%
\end{array}%
\end{equation*}%
\begin{equation*}
\begin{array}{ccl}
\left\{ d_{1}+KD_{1},d_{1}^{\prime }+KD_{1}\right\} \cdot r & = & \left\{
d_{1}+KD_{1},d_{1}^{\prime }+KD_{1}\right\} \cdot \left( s+K\right) \\
& = & \left( \left\{ d_{1},d_{1}^{\prime }\right\} +KD_{2}\right) \cdot
\left( s+K\right) \\
& = & \left( \left\{ d_{1},d_{1}^{\prime }\right\} \cdot s\right) +KD_{2} \\
& = & \left( \left\{ d_{1},d_{1}^{\prime }\cdot s\right\} \right) +KD_{2} \\
& = & \left\{ d_{1}+KD_{1},\left( d_{1}^{\prime }\cdot s\right)
+KD_{1}\right\} \\
& = & \left\{ d_{1}+KD_{1},\left( d_{1}^{\prime }+KD_{1}\right) \cdot \left(
s+K\right) \right\} \\
& = & \left\{ d_{1}+KD_{1},\left( d_{1}^{\prime }+KD_{1}\right) \cdot
r\right\} .%
\end{array}%
\end{equation*}%
for all $d_{1}+KD_{1},d_{1}^{\prime }+KD_{1},d_{1}^{\prime \prime
}+KD_{1}\in D_{1}/KD_{1},d_{2}+KD_{2}\in D_{2}/KD_{2},r\in R$ and $s+K\in
S/K.$

Let us check that $\left( \phi ^{\prime \prime },\phi ^{\prime },\phi
\right) :\left\{ D_{2},D_{1},S,\partial _{2},\partial _{1}\right\}
\rightarrow \{ D_{2}/KD_{2},D_{1}/KD_{1},R,$ \\ $ \partial _{2_\ast },\partial
_{1_\ast }\} $ where $\phi ^{\prime \prime }\left( d_{2}\right)
=d_{2}+KD_{2}$ and $\phi ^{\prime \prime }\left( d_{1}\right) =d_{1}+KD_{1}$
is a morphism of $2$-crossed modules.

\begin{equation*}
\xymatrix {D_2 \ar[r]^{\partial _{2}} \ar[d]_{\phi ^{\prime \prime }} & D_1
\ar[r]^{\partial _{1}} \ar[d]_{\phi ^{\prime}} & S \ar[d]_{\phi} & \\
D_{2}/KD_{2} \ar[r]_-{\partial_{2_\ast} } & D_{1}/KD_{1}
\ar[r]_-{\partial_{1_\ast}} & R & }
\end{equation*}%
\begin{equation*}
\begin{array}{rcl}
\phi ^{\prime \prime }\left( s\cdot d_{2}\right) & = & \left( s\cdot
d_{2}\right) +KD_{2} \\
& = & \left( s+K\right) \cdot \left( d_{2}+KD_{2}\right) \\
& = & \phi \left( s\right) \cdot \phi ^{\prime \prime }\left( d_{2}\right)%
\end{array}%
\text{ and }\
\begin{array}{rcl}
\phi ^{\prime }\left( s\cdot d_{1}\right) & = & \left( s\cdot d_{1}\right)
+KD_{1} \\
& = & \left( s+K\right) \cdot \left( d_{1}+KD_{1}\right) \\
& = & \phi \left( s\right) \cdot \phi ^{\prime }\left( d_{1}\right)%
\end{array}%
\end{equation*}%
\begin{equation*}
\begin{array}{rcl}
\partial _{2_\ast }\left( \phi ^{\prime \prime }\left( d_{2}\right) \right) &
= & \partial _{2_\ast }\left( d_{2}+KD_{2}\right) \\
& = & \partial _{2}\left( d_{2}\right) +KD_{1} \\
& = & \phi ^{\prime }\left( \partial _{2}\left( d_{2}\right) \right)%
\end{array}%
\end{equation*}%
similarly $\partial _{1_\ast }\phi ^{\prime }=\phi \partial _{1}$ for all $d_{1}\in D_{1},d_{2}\in D_{2}$ and $s\in S.$
\begin{equation*}
\begin{array}{rcl}
\left( \left\{ -,-\right\} \left( \phi ^{\prime }\times \phi ^{\prime
}\right) \right) \left( d_{1},d_{1}^{\prime }\right) & = & \left\{
-,-\right\} \left( d_{1}+KD_{1},d_{1}^{\prime }+KD_{1}\right) \\
& = & \left\{ d_{1},d_{1}^{\prime }\right\} +KD_{2} \\
& = & \phi ^{\prime \prime }\left( \left\{ d_{1},d_{1}^{\prime }\right\}
\right) \\
& = & \left( \phi ^{\prime \prime }\left\{ -,-\right\} \right) \left(
d_{1},d_{1}^{\prime }\right) .%
\end{array}%
\end{equation*}
for all $d_{1},d_{1}^{\prime } \in D_{1}.$
\newline

\textbf{Acknowledgements}
\newline
This work was partially supported by T\"{U}B\.{I}TAK (The Scientific and Technological Research Council Of Turkey) for the second author.

U. Ege Arslan and G. Onarl\i \newline
Department of Mathematics and Computer Sciences \newline
Eski\c{s}ehir Osmangazi University \newline
26480 Eski\c{s}ehir/Turkey\newline
e-mails: \{uege, gonarli\} @ogu.edu.tr

\end{document}